\newcolumntype{C}[1]{>{\centering\let\newline\\\arraybackslash\hspace{0pt}}m{#1}}
\theoremstyle{definition}
\newtheorem{theorem}{Theorem}[section]
\newtheorem{definition}[theorem]{Definition}
\newtheorem{remark}[theorem]{Remark}
\newtheorem{example}[theorem]{Example}
\newtheorem{lemma}[theorem]{Lemma}
\newtheorem{proposition}[theorem]{Proposition}
\newtheorem{corollary}[theorem]{Corollary}
\newenvironment{customthm}[1]
  {\innercustomthm}
  {\endinnercustomthm}
\newcommand{\N}{\mathbb{N}}
\newcommand{\Z}{\mathbb{Z}}
\newcommand{\D}{\mathcal{D}}
\renewcommand{\P}{\mathcal{P}}
\newcommand{\A}{\mathcal{A}}
\newcommand{\T}{\mathcal{T}}
\newcommand{\E}{\mathcal{E}}
\newcommand{\donotbreak}[1]{{}$\kern-2\mathsurround${}
  \binoppenalty10000 \relpenalty10000 #1{}$\kern-2\mathsurround${}}
\newcommand{\precdot}{\mathbin{\prec{\mkern-7mu\cdotp}}}
\newcommand{\succdot}{\mathbin{\cdotp{\mkern-5.5mu\succ}}}
\newcommand{\one}{\hat{1}}
\newcommand{\zero}{\hat{0}}
\DeclareMathOperator{\rk}{rk}
\tikzstyle{treestyle}=[
\tikzstyle{treestylewide}=[
\tikzstyle{bloomstyle}=[
\title{Shellability of generalized Dowling posets}
\author{Giovanni Paolini}
\begin{document}

\begin{abstract}
  A generalization of Dowling lattices was recently introduced by Bibby and
  Gadish, in a work on orbit configuration spaces. The authors left open the question as to whether these posets are shellable. In this paper we prove EL-shellability and use it to determine the homotopy type.
  Our result generalizes shellability of Dowling lattices and of posets of layers of abelian arrangements defined by root systems.
  We also show that subposets corresponding to invariant subarrangements are not shellable in general.
  
  \bigskip
  \noindent\textbf{Keywords:} Shellability, configuration spaces, posets, Dowling lattices.
  
  \medskip
  \noindent\textbf{Mathematics Subject Classification (2010):} Primary 06A07; Secondary 05E18, 52B22, 52C35.
\end{abstract}

\maketitle

\section{Introduction}

In a recent contribution to the study of orbit configuration spaces \cite{bibby2018combinatorics}, Bibby and Gadish introduced a class of posets $\D_n(G,S)$ which they called \emph{$S$-Dowling posets}.
Here $n$ is a positive integer, $S$ is a finite set, and $G$ is a finite group acting on $S$.
These posets arise as posets of layers of arrangements $\A_n(G,X)$ of ``singular subspaces'' in $X^n$, where $X$ is a space with a $G$-action.
They generalize both Dowling lattices \cite{dowling1973class} (which are obtained for $|S|=1$) and posets of layers of linear, toric and elliptic arrangements defined by root systems of type $C$ \cite{bibby2017representation} (obtained if $G=\Z_2$ acts trivially on $S$, and $|S| \in \{1,2,4\}$).

Dowling lattices have long been known to be shellable \cite{gottlieb1998cohomology}, and a recent work of the author with Delucchi and Girard establishes shellability of posets of layers of arrangements defined by root systems \cite{delucchi2017shellability}.
A natural question posed in \cite{bibby2018combinatorics} is to prove shellability for the $S$-Dowling posets $\D_n(G,S)$.
In Section \ref{sec:dowling-shellability} we solve this conjecture in a positive way (Theorem \ref{thm:dowling-shellability}):

\begin{customthm}{A}
  The poset $\D_n(G,S) \cup \{\one\}$ is EL-shellable.
  \label{thm:A}
\end{customthm}

The order complex of a shellable poset is homotopy equivalent to a wedge of spheres.
In Section \ref{sec:homotopy-type} we determine the number of these spheres, by counting certain rooted trees (Theorem \ref{thm:homotopy-type}):

\begin{customthm}{B}
  The order complex of the poset $\D_n(G,S) \setminus \{\zero\}$ is homotopy equivalent to a wedge of
  \[ (-1)^\epsilon \, \prod_{i=0}^{n-1} (|S|-1 + |G|i) \]
  $(n-1-\epsilon)$-dimensional spheres, except for the empty poset $\bar\D_1(\{e\}, \varnothing)$.
  Here $\epsilon = 0$ for $S \neq \varnothing$ and $\epsilon = 1$ for $S = \varnothing$.
\end{customthm}

This refines the results of \cite{bibby2018combinatorics} about the characteristic polynomial and the homology of $\D_n(G,S)$.

In the study of the posets of layers of invariant arrangements, Bibby and Gadish also introduced subposets $\P_n(G,S,T) \subseteq \D_n(G,S)$ corresponding to any $G$-invariant subset $T\subseteq S$.
When $G=\Z_2$ acts trivially on $S$, suitable choices of $T$ yield posets of layers of arrangements defined by root systems of type $B$ and $D$ \cite{bibby2017representation}, which were proved to be shellable \cite{delucchi2017shellability}.
Therefore it is natural to ask if the subposets $\P_n(G,S,T)$ are shellable in general.
In Section \ref{sec:subposets} we exhibit a family of counterexamples, obtained when all the elements of $S$ have a trivial $G$-stabilizer.
However, we prove that $\P_n(G,S,T)$ is shellable if the $G$-action on $S\setminus T$ is trivial (Theorem \ref{thm:subposet-shellability}):

\begin{customthm}{C}
  If the $G$-action on $S\setminus T$ is trivial, the poset $\P_n(G,S,T) \cup \{\one\}$ is EL-shellable.
  \label{thm:C}
\end{customthm}

Finally, we determine the homotopy type of a larger class of subposets (Theorem \ref{thm:homotopy-type-subposets}):

\begin{customthm}{D}
  Let $n\geq 2$ and $|S| \geq 1$.
  Suppose that all the $G$-orbits in $S\setminus T$ either have cardinality 1, or have a trivial stabilizer.
  Then the poset $\P_n(G,S,T) \setminus \{\zero \}$ is homotopy equivalent to a wedge of $d$-dimensional spheres with
  \[ d =
    \begin{cases}
      n-1 & \text{if $T \neq \varnothing$ or at least one $G$-orbit is trivial} \\
      n-2 & \text{otherwise}.
    \end{cases}
  \]
\end{customthm}

\section{Preliminaries}

\subsection{Generalized Dowling posets}
\label{sec:generalized-dowling-posets}

The definition of the poset $(\D_n(G,S), \preceq)$ is as follows \cite[Section 2]{bibby2018combinatorics}.
Let $[n] = \{1,2,\dotsc, n\}$.
A \emph{partial $G$-partition} is a partition $\beta = \{B_1, \dotsc, B_\ell\}$ of the subset $\cup B_i \subseteq [n]$ together with functions $b_i \colon B_i \to G$ defined up to the following equivalence relation: $b_i \sim b_i'$ if $b_i = b_i'g$ for some $g\in G$.
The functions $b_i$ can be regarded as \emph{projectivized $G$-colorings}.
Define the \emph{zero block} of $\beta$ as $Z = [n] \setminus \cup B_i$.
Then $\D_n(G,S)$ is the set of partial $G$-partitions $\beta$ of $[n]$ together with an $S$-coloring of its zero block, i.e.\ a function $z\colon Z \to S$.

Following the conventions of \cite{bibby2018combinatorics}, we use an uppercase letter $B$ for a subset of $[n]$, the corresponding lowercase letter for the function $b\colon B \to G$, and $\tilde B$ for the data $(B, \bar b)$ where $\bar b$ is the equivalence class of $b\colon B \to G$.
Then elements of $\D_n(G,S)$ take the form $(\tilde\beta, z)$, where $\tilde\beta = \{ \tilde B_1, \dotsc, \tilde B_\ell\}$ and $z\colon Z \to S$ is the $S$-coloring of the zero block.

The set $\D_n(G,S)$ is partially ordered by covering relations (for which we use the symbol $\precdot$), given by either merging two blocks or coloring one by $S$:
\begin{enumerate}
  \item[(merge)] $(\tilde \beta \cup \{\tilde A, \tilde B\}, z) \precdot (\tilde \beta \cup \{\tilde C\}, z)$ where $C = A \cup B$ and $c=a\cup bg$ for some $g\in G$;
  \item[(color)] $(\tilde \beta \cup \{\tilde B\}, z) \precdot (\tilde \beta, z')$ where $z'$ is an extension of $z$ to $Z' = B \cup Z$ such that $z'|_B$ is a composition
  \[ B \xrightarrow{b} G \xrightarrow{f} S \]
  for some $G$-equivariant function $f$.
  Since $f$ is uniquely determined by $s = f(e) \in S$ (where $e$ is the identity element of $G$), we can equivalently say that $z'(i) = b(i) \cdot s$ for all $i \in B$.
\end{enumerate}
The poset $\D_n(G,S)$ is ranked by the rank function $\rk((\tilde\beta, z)) = n - |\tilde\beta|$.
For $S = \varnothing$, the zero blocks are always empty.
Therefore the rank of $\D_n(G,S)$ is $n-\epsilon$, where
\[ \epsilon =
  \begin{cases}
    0 & \text{if $S \neq \varnothing$} \\
    1 & \text{if $S = \varnothing$}.
  \end{cases}
\]

An element $(\tilde \beta, z) \in \D_n(G,S)$ will be written also as in the following example:
\[ [1_{g_1} 3_{g_3} \mid 2_{g_2} 4_{g_4} 6_{g_6} \parallel 5_{s_5} 7_{s_7}] \]
denotes the partial set partition $[13\mid 246]$ with projectivized $G$-colorings $[g_1 : g_3]$ and $[g_2 : g_4 : g_6]$, and zero block $\{5,7\}$ colored by $5 \mapsto s_5$ and $7 \mapsto s_7$.

Following \cite[Section 3.4]{bibby2018combinatorics}, we also introduce a subposet $\P_n(G,S,T) \subseteq \D_n(G,S)$ for any $G$-invariant subset $T \subseteq S$:
\[ \P_n(G,S,T) = \{ (\tilde \beta, z) \in \D_n(G,S) : |z^{-1}(O)| \neq 1 \text{ for every $G$-orbit $O \subseteq S\setminus T$} \}. \]
It arises as the poset of layers of a suitable invariant subarrangement $\A_n(G,X;T) \subseteq \A_n(G,X)$.
The subposet $\P_n(G,S,T)$ is ranked, with rank function induced by $\D_n(G,S)$.
The case $n=1$ does not yield anything new, because $\P_1(G,S,T) = \D_1(G,T)$.
For $n\geq 2$ we have $\rk \P_n(G,S,T) = \rk \D_n(G,S) = n-\epsilon$.

\subsection{EL-shellability}
\label{sec:shellability}

We refer to \cite{Bjorner1980,BW,BW1,BW2,wachs2006poset} for the definition and basic properties of shellability.
We are particularly interested in the notion of EL-shellability, which we now recall.

Let $P$ be a bounded poset.
Denote by $\one$ and $\zero$ the top and bottom elements of $P$, respectively.
Also let $\E(P) = \{ (x,y) \in P\times P \mid x \precdot y \}$ be the set of edges of the Hasse diagram of $P$ (i.e.\ the covering relations of $P$).

An \emph{edge labeling} of $P$ is a map $\lambda\colon \E(P) \to \Lambda$, where $\Lambda$ is some poset.
Given an edge labeling $\lambda$, each maximal chain $\gamma = (x \precdot p_1 \precdot \dotsb \precdot p_k \precdot y)$ between any two elements $x \preceq y$ has an associated word
\[ \lambda(\gamma) = \lambda(x, p_1) \, \lambda(p_1, p_2) \dotsm \lambda(p_k, y). \]
The chain $\gamma$ is said to be \emph{increasing} if the associated word $\lambda(\gamma)$ is strictly increasing, and \emph{decreasing} if the associated word is weakly decreasing.
Maximal chains in a fixed interval $[x,y]\subseteq P$ can be compared lexicographically, by using the lexicographic order on the corresponding words.

\begin{definition}[{\cite[Definitions 2.1 and 2.2]{Bjorner1980}, \cite[Definition 5.2]{BW1}}]
  Let $P$ be a bounded poset.
  An \emph{edge-lexicographical labeling} (or simply \emph{EL-labeling}) of $P$ is an edge labeling such that in each closed interval $[x,y] \subseteq P$ there is a unique increasing maximal chain, and this chain lexicographically precedes all other maximal chains of $[x,y]$.
  The poset $P$ is \emph{EL-shellable} if it admits an EL-labeling.
  \label{def:el-labeling}
\end{definition}

Suppose that $P$ is an EL-shellable bounded poset, and let $\bar P = P \setminus \{\zero, \one\}$.
Then the order complexes of $P$ and $\bar P$ are shellable (see \cite[Theorem 2.3]{Bjorner1980} and \cite[Theorem 5.8]{BW1}).
Also, the order complex of $\bar P$ is homotopy equivalent to a wedge of spheres indexed by the decreasing maximal chains of $P$ \cite[Theorem 5.9]{BW1}.
When $P$ is a ranked poset, all spheres have dimension $\rk(P)-2$ and their number equals $(-1)^{\rk(P)} \mu_P(\zero, \one)$ where $\mu_P$ is the M\"obius function of $P$ (this is a standard application of a theorem by Hall \cite[Theorem 3.8.6]{Stanley}, cf.\ \cite[Theorem 2]{delucchi2017shellability}).
Notice that $\mu_P(\zero, \one) = \chi_P(0) = - \chi_{P'}(1)$, where $P' = P \setminus \{\one\}$ and $\chi_Q$ is the characteristic polynomial of a poset $Q$.

\section{EL-shellability of \texorpdfstring{$S$}{S}-Dowling posets}
\label{sec:dowling-shellability}

The poset $\D_n(G,S)$ contains a bottom element $\zero$ but it is usually not bounded from above.
We therefore introduce the bounded poset $\hat\D_n(G,S) = \D_n(G,S) \cup \{\one\}$ with $x \prec \one$ for all $x \in \D_n(G,S)$.
In this section we are going to prove that $\hat \D_n(G,S)$ is EL-shellable for every positive integer $n$, finite set $S$ and finite group $G$ acting on $S$.
This solves \cite[Conjecture 2.7.1]{bibby2018combinatorics}.
The construction of the EL-labeling takes ideas from (and generalizes) the EL-labeling of \cite{delucchi2017shellability}.

\begin{definition}
  Consider an edge $(x,y) \in \E = \E(\hat\D_n(G,S))$ with $y \neq \one$.
  \begin{itemize}
    \item Suppose that $(x,y)$ is of type ``merge'', i.e.\ $x = (\tilde \beta \cup \{\tilde A, \tilde B\}, z)$ and $y = (\tilde \beta \cup \{\tilde C \}, z)$, where $C = A \cup B$ and $c = a \cup bg$.
    We say that $(x,y)$ is \emph{coherent} if $c(\min A) = c(\min B)$ and \emph{non-coherent} otherwise.
    If $(x,y)$ is non-coherent, define $\alpha(x,y) \in G \setminus \{e\}$ as
    \[ \alpha(x,y) =
      \begin{cases}
	c(\min B) \cdot c(\min A)^{-1} & \text{if $\min A < \min B$} \\
	c(\min A) \cdot c(\min B)^{-1} & \text{otherwise}.
      \end{cases}
    \]
    Notice that this definition only depends on the $\sim$\hspace{0.03cm}-equivalence class of $c$.
    The definition of $\alpha$ is motivated as follows: if $c$ is normalized by setting $c(\min C) = e$, then $\{ c(\min A), c(\min B) \} = \{ e, \alpha(x,y) \}$.
    
    \item Suppose that $(x,y)$ is of type ``color'', i.e.\ $x = (\tilde \beta \cup \{\tilde B\}, z)$ and $y = (\tilde \beta, z')$.
    Then we say that $(x,y)$ is \emph{colored}, and its color is $z'(\min B)$.
  \end{itemize}
  \label{def:edge-types}
\end{definition}

\begin{remark}
  The previous definition is a generalization of the one given in \cite{delucchi2017shellability}, with ``signed'' replaced by ``colored''.
\end{remark}

\begin{definition}[Edge labeling of $\hat\D_n(G,S)$]
  Fix any total order $s_1 < s_2 < \dotsb < s_m$ on $S$ and any total order on $G \setminus \{e\}$ (no compatibility with the group structure, nor with the action, is needed).
  Let $\lambda$ be the edge labeling of $\hat\D_n(G,S)$ defined as follows ($A$, $B$ and $C$ are as in Definition \ref{def:edge-types}).
  \[
    \lambda(x, y) =
    \begin{cases}
      (0, \,\max(\min A, \min B)) & \text{if $(x,y)$ is coherent} \\
      (2, \,\min C, \,\alpha(x,y)) & \text{if $(x,y)$ is non-coherent} \\
      (1, \, k) & \text{if $(x,y)$ is colored of color $s_k$} \\
      (1, \, 2) & \text{if $y = \one$.}
    \end{cases}
  \]
  The values of $\lambda$ are compared lexicographically.
  In the second case, $\lambda(x,y)$ is a triple and is lexicographically larger than all the pairs that occur in the other cases.
  \label{def:dowling-labeling}
\end{definition}

\begin{lemma}[Non-coherent increasing chains]
  Let $p_1\precdot p_2 \precdot \dotsb \precdot p_k$ be an increasing chain in $\D_n(G,S)$ such that $(p_i, p_{i+1})$ is non-coherent for all $i$.
  Let $\tilde A, \tilde B$ be (non-zero) blocks of $p_1$ such that $A \cup B \subseteq C$ for some (non-zero) block $\tilde C$ of $p_k$.
  Then $c(\min A) \neq c(\min B)$.
  \label{lemma:non-coherent-increasing-chains}
\end{lemma}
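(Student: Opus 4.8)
The plan is to argue by induction on $k$, tracking how the colorings of the blocks $\tilde A$ and $\tilde B$ evolve as we pass through the chain. The base case $k=1$ is trivial (there is nothing to check, since $A\cup B\subseteq C$ forces $\tilde A$ and $\tilde B$ to already lie in a common block, hence $A=B$). For the inductive step, consider the first covering relation $(p_1,p_2)$. Since every edge in the chain is non-coherent (hence of type ``merge''), $p_2$ is obtained from $p_1$ by merging two blocks $\tilde D$ and $\tilde E$ into $\tilde F=\widetilde{D\cup E}$ with label $\lambda(p_1,p_2)=(2,\min F,\alpha(p_1,p_2))$.

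The key case analysis concerns the relationship between $\{\tilde A,\tilde B\}$ and $\{\tilde D,\tilde E\}$. If $\{\tilde A,\tilde B\}\cap\{\tilde D,\tilde E\}=\varnothing$, then $\tilde A$ and $\tilde B$ survive unchanged in $p_2$, and we apply the inductive hypothesis to $p_2\precdot\dotsb\precdot p_k$ directly. If (say) $\tilde A\in\{\tilde D,\tilde E\}$ but $\tilde B\notin\{\tilde D,\tilde E\}$, then in $p_2$ the block $\tilde B$ is unchanged while $\tilde A$ is replaced by the larger block $\tilde F\supseteq A$; I would show that $f(\min A)=c(\min A)$ up to the global $\sim$-ambiguity, namely after normalizing $f$ so that $f|_A$ restricts to (a representative of) $c$ — this is possible precisely because $A$ is one of the two merged blocks, so $c|_A$ is part of the data defining $\bar f$. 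Then applying the inductive hypothesis to the pair $\tilde F,\tilde B$ in the chain $p_2\precdot\dotsb\precdot p_k$ gives $f(\min F)\neq f(\min B)$; combined with $\min F\le\min A$ and the normalization, this is where I need to be careful to translate back to a statement about $\min A$ versus $\min B$. The remaining case is $\{\tilde A,\tilde B\}=\{\tilde D,\tilde E\}$: here $\tilde A$ and $\tilde B$ are merged at the very first step, so $F=C'$ where $\tilde C'$ is the ancestor of $\tilde C$, and non-coherence of $(p_1,p_2)$ says directly $c(\min A)\neq c(\min B)$, which is exactly the conclusion.

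The main obstacle I anticipate is the bookkeeping in the mixed case $\tilde A\in\{\tilde D,\tilde E\}$, $\tilde B\notin\{\tilde D,\tilde E\}$: one must carefully argue that the $\sim$-equivalence class of the coloring on the merged block $\tilde F$ can be chosen to agree with $c$ on $A$, so that the inequality $c(\min A)\neq c(\min B)$ is genuinely preserved rather than getting garbled by the group-action ambiguity. A clean way to handle this uniformly is to fix, once and for all, a normalization convention: for each block appearing along the chain, normalize its coloring so that its value at the minimal element of the block is $e$. With this convention the merge rule $c=a\cup bg$ becomes explicit — the value $\alpha(p_1,p_2)$ is precisely the nonzero color appearing at $\min$ of one of the two merged sub-blocks — and the claim $c(\min A)\neq c(\min B)$ translates into a statement purely about these normalized values, which propagate predictably. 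One subtlety is that changing normalization when a block absorbs others multiplies all its values by a fixed group element, so ``$c(\min A)=c(\min B)$'' versus ``$c(\min A)\neq c(\min B)$'' is normalization-independent (as already noted for the definition of $\alpha$), and it is this invariance that makes the induction go through. I would also invoke the hypothesis that the chain is \emph{increasing} to rule out the possibility that $\tilde A$ and $\tilde B$ get merged with each other through intermediate blocks in a way that produces a coherent edge — but in fact the hypothesis already stipulates every edge is non-coherent, so increasingness is used only to pin down the order of labels and is not essential to this particular lemma.
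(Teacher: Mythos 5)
Your induction has a genuine gap in the mixed case, and the gap is tied to a misconception about the role of increasingness that you express at the end of your sketch.

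In the case $\tilde A \in \{\tilde D, \tilde E\}$, $\tilde B \notin \{\tilde D, \tilde E\}$, the inductive hypothesis applied to the blocks $\tilde F$ and $\tilde B$ of $p_2$ yields $c(\min F) \neq c(\min B)$. When $\min F = \min A$ this is exactly what you want, but when $\min F < \min A$ (i.e.\ $\min E < \min A$ where $E$ is the other merged block), there is no way to pass from $c(\min F) \neq c(\min B)$ to $c(\min A) \neq c(\min B)$: the two inequalities are logically independent. You flag this as ``where I need to be careful,'' but no amount of care with the $\sim$-normalization resolves it --- the information that the induction carries simply does not determine $c(\min A)$. To close the gap one must prove a stronger inductive statement that actually tracks the value $c(\min A)$, and the only handle on that value comes from the increasingness of the labels, which you then explicitly dismiss as ``not essential to this particular lemma.'' That dismissal is wrong: the lemma is false without strict increasingness, even when all edges are non-coherent. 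For instance, with $G = \Z_2 = \{e,g\}$, $S = \varnothing$, $n = 3$, the chain $[1|2|3] \precdot [1_e 2_g | 3] \precdot [1_e 2_g 3_g]$ consists entirely of non-coherent edges with labels $(2,1,g), (2,1,g)$ (weakly but not strictly increasing), yet taking $\tilde A = \{2\}$, $\tilde B = \{3\}$ gives $c(\min A) = c(\min B) = g$.

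The paper's proof sidesteps these difficulties by taking a \emph{minimal} counterexample: after shrinking $[p_1,p_k]$ so that $p_k$ is the first level where $A,B$ share a block and $p_1$ is the last where both are separate, strict increasingness of the labels $(2,j_i,g_i)$ combined with the inclusion $A'' \subseteq C$ forces $j_1 = j_2 = \dotsb = j_{k-1} = \min C$ (every merge involves $\min C$) and $g_1 < \dotsb < g_{k-1}$. This pins down $c(\min A) \in \{e, g_1\}$ and $c(\min B) \in \{e, g_{k-1}\}$ and the inequality falls out. The minimality trick is exactly what replaces the failing inductive step in your sketch; a first-merge induction does not have access to this global rigidity.
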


\begin{proof}
  Suppose that $\gamma = (p_1\precdot p_2 \precdot \dotsb \precdot p_k)$ is a chain of minimal length for which the lemma is false.
  By minimality, $p_k$ is the first element of $\gamma$ where $A$ and $B$ are contained in the same block.
  In other words, the edge $(p_{k-1},p_k)$ merges two blocks $A'\supseteq A$ and $B' \supseteq B$ of $p_{k-1}$ into the single block $C = A'\cup B'$ of $p_k$.
  Also by minimality, $p_1$ is the last element of $\gamma$ where both $A$ and $B$ are not contained in a larger block.
  Then assume without loss of generality that the edge $(p_1,p_2)$ merges $A$ and some other block of $p_1$ into a single block $A''\supseteq A$ of $p_2$.
  Therefore we have the inclusions $A \subset A'' \subseteq A' \subset C$.
  
  Let $\lambda(p_i, p_{i+1}) = (2, j_i, g_i)$. Then we have the following increasing sequence of labels:
  \[ (2, j_1, g_1) < (2, j_2, g_2) < \dotsb < (2, j_{k-1}, g_{k-1}). \]
  We have that $j_1 = \min A'' \geq \min C = j_{k-1}$, so $j_1 = j_2 = \dotsb = j_{k-1} = \min C$.
  This means that each edge of $\gamma$ consists of a merge which involves the element $\min C$.
  Also, $g_1 < g_2 < \dotsb < g_{k-1}$.
  If we normalize $c$ so that $c(\min C) = e$, by definition of $\lambda$ we have that:
  \begin{itemize}
    \item $c(\min A) =
      \begin{cases}
	e & \text{if $\min A = \min C$} \\
	g_1 & \text{otherwise};
      \end{cases}
    $
    \smallskip
    \item $c(\min B) = 
    \begin{cases}
	e & \text{if $\min B = \min C$ (this can only happen if $k=2$)} \\
	g_{k-1} & \text{otherwise.}
      \end{cases}
    $
  \end{itemize}
  If $k=2$, then $A'' = A \cup B$ and exactly one of $c(\min A)$ and $c(\min B)$ is equal to $e$.
  If $k > 2$, we have $c(\min B) = g_{k-1}$, which is different from both $e$ and $g_1$.
  In any case, $c(\min A) \neq c(\min B)$.
\end{proof}

\begin{theorem}[EL-shellability]
  The edge labeling $\lambda$ of Definition \ref{def:dowling-labeling} is an EL-labeling of $\hat\D_n(G,S)$.
  \label{thm:dowling-shellability}
\end{theorem}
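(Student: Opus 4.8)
The plan is to verify the two conditions of Definition~\ref{def:el-labeling} directly for every closed interval $[x,y]\subseteq\hat\D_n(G,S)$: that it has a unique increasing maximal chain, and that this chain is lexicographically first. The backbone of the whole argument is that the lexicographic order on the values of $\lambda$ separates the three kinds of moves — every coherent-merge label $(0,\cdot)$ precedes every coloring label $(1,\cdot)$, which precedes every non-coherent-merge label $(2,\cdot,\cdot)$. Hence any increasing maximal chain of $[x,y]$ breaks up into an initial block of coherent merges, a middle block of colorings, and a final block of non-coherent merges; when $y=\one$, the edge into $\one$, labeled $(1,2)$, is necessarily the last edge of the chain.

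I would dispose of the case $y=\one$ first. Since $(1,2)$ is smaller than every $(2,\cdot,\cdot)$ and labels increase along a chain, an increasing chain of $[x,\one]$ contains no non-coherent merge; and since its coloring labels strictly increase and all lie below $(1,2)$, there is at most one coloring and its color is $s_1$. One then checks that the chain obtained by coherently merging all non-zero blocks of $x$ into a single block — snowballing the remaining blocks, in increasing order of their minima, onto the block containing the global minimum — followed (when $S\neq\varnothing$ and $x$ is not already maximal) by coloring that block with $s_1$, and finally by the edge into $\one$, is increasing; and that the restrictions just listed force this chain and make it lexicographically first.

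For $y\neq\one$ the interval lies in $\D_n(G,S)$, and the substantive work is to identify the forced shape of an increasing chain. Writing $c$ for the (normalized) coloring of a non-zero block $\tilde C$ of $y$, call two blocks of $x$ contained in $\tilde C$ \emph{equivalent} if they share the value of $c$ at their minima, and call two blocks of $x$ absorbed into the zero block of $y$ \emph{equivalent} if they receive the same target color; these equivalence classes are the ``color classes''. Using Lemma~\ref{lemma:non-coherent-increasing-chains} I would show that the initial block of an increasing chain must merge each color class completely: if two equivalent blocks $\tilde A,\tilde B$ were left apart, then, all later merges being non-coherent, they would be joined by an all-non-coherent sub-chain, contradicting Lemma~\ref{lemma:non-coherent-increasing-chains}. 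Within a color class the labels $(0,\max(\min A,\min B))$ force the unique snowball order of merges (absorbing onto the member of smallest minimum in increasing order of the absorbed minima); the middle block must then color each merged zero-block class, in strictly increasing order of color (two blocks of the same target color having already been merged into one); and within each non-zero block $\tilde C$ the final block must absorb the merged color classes onto the one containing $\min C$, in increasing order of the resulting value $\alpha$ of Definition~\ref{def:edge-types} — merging two off-centre classes instead would produce a label $(2,j,\cdot)$ with $j>\min C$, too large to precede the remaining $(2,\min C,\cdot)$ moves. These forced choices single out one chain; a short computation with the formulas of Definitions~\ref{def:edge-types}--\ref{def:dowling-labeling} shows that its label word is strictly increasing, which proves both existence and uniqueness of the increasing chain.

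It remains to prove lexicographic minimality. I would check that, at each vertex of the increasing chain, the move chosen has the smallest label among the edges from that vertex lying on some maximal chain to $y$, and that this smallest label is attained by a unique such edge: two coherent merges of equal label $(0,t)$ must both involve the unique block of minimum $t$ and the unique merged block of its color class containing all members of smaller minimum, so they coincide; two colorings of equal label would color zero-block blocks of the same target color, which after the initial block is impossible; and two non-coherent merges of equal label $(2,j,\alpha)$ act inside the same non-zero block of $y$ (determined by $j=\min C$) and therefore coincide. Hence any maximal chain that differs from the increasing one carries a strictly larger label at the first vertex where they part, completing the proof. The step I expect to be most delicate is the combinatorial bookkeeping of the previous paragraph — pinning down exactly which coherent merges are forced and deriving their completeness from Lemma~\ref{lemma:non-coherent-increasing-chains}, while simultaneously handling the zero block, whose color classes are governed by the $S$-coloring rather than by $G$-values; the remaining verifications are routine, if somewhat lengthy.
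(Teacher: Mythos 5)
Your proposal is essentially correct, and its overall route differs from the paper's in a meaningful way. The paper handles $y\neq\one$ by splitting into three sub-cases: (i) $y$ has a single non-singleton block and an empty zero block, (ii) all non-zero blocks of $y$ are singletons, and (iii) the general case, where $[x,y]$ factors as a product of intervals of types (i) and (ii) and one invokes the product theorem for EL-labelings (\cite[Proposition~10.15]{BW2}). You instead run one direct argument on the general interval, tracking the interleaving of moves across all color classes and all non-zero blocks of $y$ simultaneously. Both proofs rest on the same structural observations — the ordering $(0,\cdot)<(1,\cdot)<(2,\cdot,\cdot)$ forcing the ``coherent, then colored, then non-coherent'' shape, and Lemma~\ref{lemma:non-coherent-increasing-chains} forcing each color class inside a non-zero block of $y$ to be fully assembled during the coherent phase. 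The paper's product reduction buys a cleaner proof, isolating the noncoherent analysis and the coloring analysis into shorter cases and outsourcing the recombination; your direct version avoids the citation but, as you yourself note, carries heavier bookkeeping, since you must verify uniqueness and lexicographic minimality in the presence of several non-zero blocks and zero-block classes at once, not one at a time.

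One small imprecision worth correcting: for two blocks $\tilde A,\tilde B$ of $x$ destined for the \emph{zero block} of $y$ with the same target $z'$-value, you appeal to Lemma~\ref{lemma:non-coherent-increasing-chains} to conclude they must be coherently merged first. But such blocks would never be joined by non-coherent merges — each would simply be colored into the zero block — so the lemma does not apply. The correct reason, which is what the paper uses in its Case~3, is that two separate colorings carrying the same label $(1,k)$ would violate strict increase of the label word, so blocks headed for the same target color must already share a block before the coloring phase begins. Lemma~\ref{lemma:non-coherent-increasing-chains} is only needed for blocks destined for a common non-zero block of $y$.
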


\begin{proof}
  In order to check Definition \ref{def:el-labeling}, consider an interval $[x,y]$ of $\hat\D_n(G,S)$.
  For $x=y$ or $x \precdot y$ there is nothing to prove, so assume $\rk (y) - \rk (x) \geq 2$.
  Let $x = (\tilde \beta, z)$, with underlying partition $\tilde\beta = \{\tilde B_1, \dotsc, \tilde B_\ell\}$ and zero block $Z$.
  Order the blocks of $x$ so that $\min B_1 < \dotsb < \min B_\ell$.
  
  \smallskip
  \textbf{Case 1: $y = \one$.}
  Suppose that $\gamma=(x = p_0 \precdot p_1 \precdot \dotsb \precdot p_k \precdot \one)$ is an increasing maximal chain in the interval $[x,\one]$, with $k\geq 1$.
  Colored edges exist along $\gamma$ if and only if $S \neq \varnothing$ and $\tilde\beta \neq \varnothing$.
  Since $\gamma$ is increasing, we can deduce the following: the last edge $(p_k, \one)$ is labeled $(1,2)$; at most one edge (namely $(p_{k-1}, p_k)$) is labeled $(1,1)$ and is colored; all other edges are coherent and their labels are forced to be $(0,\min B_2), (0,\min B_3), \dotsc, (0,\min B_{\ell})$.
  Notice that $k=\ell$ if $(p_{k-1}, p_k)$ is colored (which happens if and only if $S\neq\varnothing$ and $\tilde\beta\neq\varnothing$), otherwise $k = \ell-1$.
  In any case $p_{\ell-1}$ has a single non-zero block $B = B_1 \cup \dotsb \cup B_\ell$, with $G$-coloring $b$ uniquely determined by $b(\min B_1) = \dotsb = b(\min B_\ell)$ since all edges from $x$ to $p_{\ell-1}$ are coherent.
  Therefore $p_{\ell-1}$ is uniquely determined by $x$.
  If $k=\ell$ the edge $(p_{k-1},p_k)$ is colored with color $s_1$, and thus $p_k = (\varnothing, z_k)$ is uniquely determined by the condition $z_k(\min B) = s_1$.
  Finally, there is exactly one coherent increasing chain from $x$ to $p_{\ell-1}$, in which $(p_i, p_{i+1})$ is the coherent edge which merges the blocks $B_1\cup \dotsb \cup B_{i+1}$ and $B_{i+2}$.
  
  The previous argument also shows how to construct an increasing maximal chain $\gamma=(x = p_0 \precdot p_1 \precdot \dotsb \precdot p_k \precdot \one)$ in $[x,\one]$, so there is exactly one such chain.
  For all $i \in \{0,\dotsc,k-1\}$, the label $\lambda(p_i, q)$ is minimized (only) when $q=p_{i+1}$.
  Therefore $\gamma$ is lexicographically minimal.
  
  \smallskip
  \textbf{Case 2: $y = (\tilde\beta', \varnothing)$, and $\tilde\beta'$ has only one non-singleton block $\tilde B$.}
  Let $b$ be the $G$-coloring of $\tilde B$ in $y$, normalized to have $b(\min B) = e$.
  Suppose without loss of generality that $B = B_1 \cup \dots \cup B_{\ell'}$, for some $\ell' \leq \ell$.
  Let $g_i = b(\min B_i)$ for $i=1,\dotsc, \ell'$.
  Notice that $g_1 = e$, since $\min B_1 = \min B$.
  
  Suppose that $\gamma=(x = p_0 \precdot p_1 \precdot \dotsb \precdot p_k = y)$ is an increasing maximal chain in $[x,y]$.
  Since the zero block of $y$ is empty, along $\gamma$ there are no colored edges.
  Then the edges are coherent from $x$ to $p_m$ for some $m \in \{0,\dotsc, k\}$, and non-coherent from $p_m$ to $y$.
  Let $p_m = (\{\tilde A_1, \dotsc, \tilde A_s\}, \varnothing)$.
  Since there is a coherent chain from $x$ to $p_m$, any block $A_j$ of $p_m$ is a union $B_{i_1} \cup \dotsb \cup B_{i_r}$ with $g_{i_1} = \dotsb = g_{i_r}$, and $b(\min A_j) = g_{i_1} = \dotsb = g_{i_r}$.
  Lemma \ref{lemma:non-coherent-increasing-chains} implies that $b(\min A_i) \neq b(\min A_j)$ for all $i\neq j$.
  Therefore $p_m$ is uniquely determined by $x$ and $y$.
  
  The part of $\gamma$ from $x$ to $p_m$ consists only of coherent edges, and it is uniquely determined by $x$ and $p_m$ as in Case 1.
  Consider now the part of $\gamma$ from $p_m$ to $y$.
  If $p_m = y$ there is nothing to prove, so suppose $p_m \neq y$.
  The labels from $p_m$ to $y$ take the form
  \[ (2, j_m, h_m) < (2, j_{m+1}, h_{m+1}) < \dotsb < (2, j_{k-1}, h_{k-1}). \]
  By definition of $\lambda$, we have $j_{k-1} = \min B$ and $j_m = \min (A_i \cup A_j)$ for some $i \neq j$. Since $A_i \cup A_j \subseteq B$, we can deduce that $j_m \geq j_{k-1}$.
  This implies $j_m = j_{m+1} = \dotsb = j_{k-1} = \min B$.
  Therefore every non-coherent edge consists of a merge which involves the element $\min B$.
  Also, we have that $h_m < h_{m+1} < \dotsb < h_{k-1}$.
  The elements $e, h_m, h_{m+1}, \dotsc, h_{k-1}$ of $G$ are all distinct, and by definition of $\lambda$ they coincide (up to some permutation) with $b(\min A_1), b(\min A_2), \dotsc, b(\min A_s)$.
  Then the chain from $p_m$ to $y$ is forced by the order $h_m < h_{m+1} < \dotsb < h_{k-1}$: first merge the block corresponding to $e$ with the block corresponding to $h_m$; then merge the resulting block with the block corresponding to $h_{m+1}$; and so on.
  At each step, the $G$-coloring is determined by $b$.
  
  We proved that an increasing chain $\gamma$ in $[x,y]$ is uniquely determined by $x$ and $y$, and our argument shows how to construct such a chain.
  We still need to prove that $\gamma$ is lexicographically minimal in $[x,y]$.
  Suppose that a lexicographically minimal chain $\gamma'$ first differs from $\gamma$ at some edge $(p_r, p_{r+1}')$, i.e.\ $\lambda(p_r, p_{r+1}') < \lambda(p_r, p_{r+1})$.
  \begin{itemize}
    \item If $r < m$, the edge $(p_r,p_{r+1})$ is coherent, so the edge $(p_r, p_{r+1}')$ must also be coherent.
    In order to remain in the interval $[x,y]$, a coherent merge between two blocks $C_1$ and $C_2$ of $p_r$ is possible only if $b(\min C_1) = b(\min C_2)$.
    Then $\lambda(p_r,p_{r+1}')$ is minimized for $p_{r+1}' = p_{r+1}$.
    
    \item If $r \geq m$, the chain $\gamma'$ coincides with $\gamma$ at least up to $p_m = (\{\tilde A_1, \dotsc, \tilde A_s\}, \varnothing)$. The edge $(p_r,p_{r+1}')$ cannot be coherent, because $b(\min A_i) \neq b(\min A_j)$ for all $i\neq j$.
    Then $(p_r, p_{r+1}')$ is non-coherent. The second entry of $\lambda(p_r,p_{r+1}')$ is at least $\min B$, so it must be equal to $\min B$ by minimality of $\gamma'$.
    This means that $(p_r, p_{r+1}')$ consists of a non-coherent merge which involves $\min B$.
    Then the possible values for the third entry of $\lambda(p_r,p_{r+1}')$ are $\{h_r, h_{r+1}, \dotsc, h_{k-1}\}$. The smallest one is $h_r$, which is attained for $p_{r+1}' = p_{r+1}$.
  \end{itemize}
  
  \smallskip
  \textbf{Case 3: $y = (\tilde\beta', z')$, and all blocks of $\tilde\beta'$ are singletons.}
  Suppose that $\gamma=(x = p_0 \precdot p_1 \precdot \dotsb \precdot p_k = y)$ is an increasing maximal chain in $[x,y]$.
  By the structure of $y$, the last edge $(p_{k-1},y)$ of $\gamma$ must be colored.
  Then the edges along $\gamma$ are coherent from $x$ to $p_m$ for some $m \in \{0,\dotsc, k\}$, and colored from $p_m$ to $y$.
  Let $p_m = (\{\tilde A_1, \dotsc, \tilde A_s\}, z)$.
  Since all the merges of $\gamma$ are coherent, if two blocks $B_i$ and $B_j$ of $x$ are contained in the same block $A$ of $p_m$, then we have $a(\min B_i) = a(\min B_j)$ and therefore $z'(\min B_i) = z'(\min B_j)$.
  In addition, $z'(\min A_i) \neq z'(\min A_j)$ for all distinct blocks $A_i, A_j$ of $p_m$, because the colored edges have strictly increasing (and thus distinct) colors.
  Putting everything together, two blocks $B_i$ and $B_j$ of $x$ are contained in the same block of $p_m$ if and only if $z'(\min B_i) = z'(\min B_j)$.
  This determines $p_m$ uniquely.
  
  The part of $\gamma$ from $x$ to $p_m$ is uniquely determine as in Case 1.
  Then there must be a colored edge for each (non-zero) block of $p_m$. Their colors are determined, so their order is also determined, because the sequence of colors must be increasing.
  
  Therefore the whole chain $\gamma$ is uniquely determined by $x$ and $y$, and once again the previous argument explicitly yields one such chain.
  Suppose that a lexicographically minimal chain $\gamma'$ in $[x,y]$ first differs from $\gamma$ at some edge $(p_r, p_{r+1}')$, i.e.\ $\lambda(p_r, p_{r+1}') < \lambda(p_r, p_{r+1})$.
  \begin{itemize}
    \item If $r < m$, the edge $(p_r,p_{r+1})$ is coherent, so the edge $(p_r, p_{r+1}')$ must also be coherent.
    In order to remain in the interval $[x,y]$, a coherent merge between two blocks $C_1$ and $C_2$ of $p_i$ is possible only if $z'(\min C_1) = z'(\min C_2)$.
    Then $\lambda(p_r,p_{r+1}')$ is minimized for $p_{r+1}' = p_{r+1}$.
    
    \item If $r \geq m$, the chain $\gamma'$ coincides with $\gamma$ at least up to $p_m = (\{\tilde A_1, \dotsc, \tilde A_s\}, z)$. The edge $(p_r,p_{r+1}')$ cannot be coherent, because $z'(\min A_i) \neq z'(\min A_j)$ for all $i\neq j$.
    Then $(p_r, p_{r+1}')$ is colored.
    The possible colors are given by the values of $z'$ on $\min A_1, \dotsc, \min A_s$.
    The smallest color still available is attained for $p_{r+1}' = p_{r+1}$.
  \end{itemize}
  
  \smallskip
  \textbf{Case 4: $y \neq \one$.}
  Let $y = (\tilde \beta', z')$ with underlying partition $\tilde\beta' = \{\tilde B_1', \dotsc, \tilde B_r'\}$ and zero block $Z'$.
  The interval $[x,y]$ is isomorphic to a product of intervals $[x_i, y_i] \subseteq \D_n(G,S)$ for $0 \leq i \leq r$, where: $y_0$ has the same zero block as $y$ (with the same $S$-coloring), and all other blocks are singletons; for $1 \leq i \leq r$, $y_i$ has exactly one non-singleton block which is equal to $\tilde B_i$, and an empty zero block.
  For each $i\neq j$, the sets of labels used in the intervals $[x_i,y_i]$ and $[x_j,y_j]$ are disjoint.
  By Case 2 and Case 3, $\lambda|_{[x_i,y_i]}$ is an EL-labeling for all $i$.
  Then also $\lambda|_{[x,y]}$ is an EL-labeling by \cite[Proposition 10.15]{BW2}.
\end{proof}

\begin{remark}
  The group structure of $G$ and the $G$-action on $S$ play a very little role in our EL-labeling.
  A similar unexpected separation between combinatorics and algebra was already observed in the characteristic polynomial of $\D_n(G,S)$ \cite[Remark 2.5.3]{bibby2018combinatorics}.
\end{remark}

\section{Homotopy type of \texorpdfstring{$S$}{S}-Dowling posets}
\label{sec:homotopy-type}

As in the introduction, let
\begin{equation}
\epsilon =
  \begin{cases}
    0 & \text{if $S \neq \varnothing$} \\
    1 & \text{if $S = \varnothing$}.
  \end{cases}
  \label{eq:epsilon}
\end{equation}
In this section we assume not to be in the degenerate case $S = \varnothing$, $G = \{e\}$ and $n=1$, because $\D_1(\{e\}, \varnothing) \setminus\{\zero\}$ is empty.

Since the poset $\hat\D_n(G,S)$ is (EL-)shellable and has rank $n+1-\epsilon$, the order complex of $\bar\D_n(G,S) = \D_n(G,S) \setminus \{\zero\}$ is homotopy equivalent to a wedge of $(n-1-\epsilon)$-dimensional spheres.
The homotopy type is therefore determined by the number of these spheres.
As recalled in Section \ref{sec:shellability}, we have (at least) two ways to determine this number: as an evaluation of the characteristic polynomial, and as the number of decreasing maximal chains in an EL-labeling.

The characteristic polynomial $\chi(t)$ of $\D_n(G,S)$ was computed in \cite[Theorem 2.5.2]{bibby2018combinatorics}:
\[
  \chi(t) =
  \begin{cases}
    \displaystyle\prod_{i=0}^{n-1} (t - |S| - |G|i) & \text{if $S\neq \varnothing$} \\[0.5cm]
    \displaystyle\prod_{i=1}^{n-1} (t - |G|i) & \text{if $S = \varnothing$}.
  \end{cases}
\]
\vskip0.2cm
\noindent Therefore the number of spheres is given by
\begin{IEEEeqnarray*}{rCl}
  (-1)^{\rk \D_n(G,S)} \chi(1) &=& (-1)^{n-\epsilon} \, \prod_{i=0}^{n-1} (1-|S|-|G|i) \\
  &=& (-1)^\epsilon \, \prod_{i=0}^{n-1} (|S|-1 + |G|i).
\end{IEEEeqnarray*}
Notice that the product vanishes for $|S|=1$.
This is correct, because in this case the poset $\bar\D_n(G,S)$ is bounded from above, and thus its order complex is contractible.

In the rest of this section we are going to prove the previous formula for the number of spheres by counting the decreasing maximal chains in $\hat\D_n(G,S)$, with respect to the EL-labeling of Definition \ref{def:dowling-labeling}.
Some of the arguments below are similar to those of \cite[Section 5]{delucchi2017shellability}.
We first recall the notion of \emph{increasing ordered tree} \cite{chen1994heap,gessel1995enumeration,klazar1997twelve,delucchi2017shellability}.

\begin{definition}
  An \emph{increasing ordered tree} is a rooted tree, with nodes in bijection with a finite subset $L \subset \N$ of labels, such that:
  \begin{itemize}
    \item each path from the root to any leaf has increasing labels (in particular, the root has label $\min L$);
    \item for each node, a total order of its children is specified.
  \end{itemize}
  If $L$ is not specified, it is assumed to be $\{0,1,\dotsc,n\}$ for some integer $n\geq 0$.
\end{definition}

\begin{figure}[htbp]
  \begin{tikzpicture}[treestyle]
  \node [circle,draw] (1) {0}
    child {node [circle,draw] (2) {1}
      child {node [circle,draw] (3) {2}
      }
    };
  \end{tikzpicture}
  \qquad
  \begin{tikzpicture}[treestyle]
  \node [circle,draw] (1) {0}
    child {node [circle,draw] (2) {1}
    }
    child {node [circle,draw] (3) {2}
  };
  \end{tikzpicture}
  \qquad
  \begin{tikzpicture}[treestyle]
  \node [circle,draw] (1) {0}
    child {node [circle,draw] (3) {2}
    }
    child {node [circle,draw] (2) {1}
  };
  \end{tikzpicture}
  \caption{All different increasing ordered trees on $3$ nodes. The second and the third tree differ in the total order of the children of the root.}
  \label{fig:trees}
\end{figure}

It is useful to introduce the following variant of increasing ordered trees, which generalizes the $q$-blooming trees of \cite[Definition 9]{delucchi2017shellability}.

\begin{definition}
  Let $q,r\geq 0$ be integers.
  A \emph{$(q,r)$-blooming tree} is an increasing ordered tree with $q$ extra indistinguishable unlabeled nodes appended to the root, and $r$ extra indistinguishable unlabeled nodes appended to each labeled node other than the root.
  The extra unlabeled nodes are called \emph{blooms}.
  The only thing that matters about blooms is their position in the total order of the children of a node.
  \label{def:blooming-tree}
\end{definition}

Blooms can be regarded as separators placed in the list of the children of a node.
See Figure \ref{fig:blooming-trees} for a few examples.

\begin{figure}
  \begin{tikzpicture}
    \begin{scope}[style=treestyle]
    \node [circle,draw] (0) {0}
      child {node [circle,draw] (1) {1}
	child {node [circle,draw] (2) {2}
	}
      };
    \end{scope}
    \begin{scope}[style=bloomstyle]
    \node [left = 0.5cm of 1] (a) {};
    \node [left = 0.5cm of a] (b) {};
    \node [left = 0.5cm of 2] (c) {};
    \node [below = 0.5cm of 2] (d) {};
    \draw (0) -- (a);
    \draw (0) -- (b);
    \draw (1) -- (c);
    \draw (2) -- (d);
    \end{scope}
    \node [right = 0.5cm of 2] (fake) {};
  \end{tikzpicture}
  \qquad
  \begin{tikzpicture}
    \begin{scope}[style=treestyle]
    \node [circle,draw] (0) {0}
      child {node [circle,draw] (1) {1}
	child {node [circle,draw] (2) {2}
	}
      };
    \end{scope}
    \begin{scope}[style=bloomstyle]
    \node [left = 0.5cm of 1] (a) {};
    \node [right= 0.5cm of 1] (b) {};
    \node [left = 0.5cm of 2] (c) {};
    \node [below = 0.5cm of 2] (d) {};
    \draw (0) -- (a);
    \draw (0) -- (b);
    \draw (1) -- (c);
    \draw (2) -- (d);
    \end{scope}
  \end{tikzpicture}
  \qquad
  \begin{tikzpicture}
    \begin{scope}[style=treestyle]
    \node [circle,draw] (0) {0}
      child {node [circle,draw] (1) {1}
	child {node [circle,draw] (2) {2}
	}
      };
    \end{scope}
    \begin{scope}[style=bloomstyle]
    \node [right = 0.5cm of 1] (a) {};
    \node [right = 0.5cm of a] (b) {};
    \node [left = 0.5cm of 2] (c) {};
    \node [below = 0.5cm of 2] (d) {};
    \draw (0) -- (a);
    \draw (0) -- (b);
    \draw (1) -- (c);
    \draw (2) -- (d);
    \end{scope}
  \end{tikzpicture}
  
  \vskip0.5cm
  
  \begin{tikzpicture}
    \begin{scope}[style=treestyle]
    \node [circle,draw] (0) {0}
      child {node [circle,draw] (1) {1}
	child {node [circle,draw] (2) {2}
	}
      };
    \end{scope}
    \begin{scope}[style=bloomstyle]
    \node [left = 0.5cm of 1] (a) {};
    \node [left = 0.5cm of a] (b) {};
    \node [right = 0.5cm of 2] (c) {};
    \node [below = 0.5cm of 2] (d) {};
    \draw (0) -- (a);
    \draw (0) -- (b);
    \draw (1) -- (c);
    \draw (2) -- (d);
    \end{scope}
  \end{tikzpicture}
  \qquad
  \begin{tikzpicture}
    \begin{scope}[style=treestyle]
    \node [circle,draw] (0) {0}
      child {node [circle,draw] (1) {1}
	child {node [circle,draw] (2) {2}
	}
      };
    \end{scope}
    \begin{scope}[style=bloomstyle]
    \node [left = 0.5cm of 1] (a) {};
    \node [right= 0.5cm of 1] (b) {};
    \node [right = 0.5cm of 2] (c) {};
    \node [below = 0.5cm of 2] (d) {};
    \draw (0) -- (a);
    \draw (0) -- (b);
    \draw (1) -- (c);
    \draw (2) -- (d);
    \end{scope}
  \end{tikzpicture}
  \qquad
  \begin{tikzpicture}
    \begin{scope}[style=treestyle]
    \node [circle,draw] (0) {0}
      child {node [circle,draw] (1) {1}
	child {node [circle,draw] (2) {2}
	}
      };
    \end{scope}
    \begin{scope}[style=bloomstyle]
    \node [right = 0.5cm of 1] (a) {};
    \node [right = 0.5cm of a] (b) {};
    \node [right = 0.5cm of 2] (c) {};
    \node [below = 0.5cm of 2] (d) {};
    \draw (0) -- (a);
    \draw (0) -- (b);
    \draw (1) -- (c);
    \draw (2) -- (d);
    \end{scope}
    \node [left = 0.5cm of 2] (fake) {};
  \end{tikzpicture}
  
  \caption{All different $(2,1)$-blooming trees constructed from the leftmost increasing ordered tree of Figure \ref{fig:trees}.
  These are $6$ of the $18$ different $(2,1)$-blooming trees on $3$ (labeled) nodes.
  Blooms are shown as smaller black (unlabeled) nodes.
  }
  \label{fig:blooming-trees}
\end{figure}

\begin{lemma}
  The number of $(q,r)$-blooming trees on $n+1$ (labeled) nodes is
  \[ \prod_{i=0}^{n-1} (q+1+(r+2)i). \]
  \label{lemma:number-trees}
\end{lemma}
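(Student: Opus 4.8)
The plan is to set up a bijective or recursive enumeration of $(q,r)$-blooming trees on $n+1$ labeled nodes. I would proceed by induction on $n$, peeling off the node with the largest label.

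First I would handle the base case $n=0$: there is a single labeled node (the root, labeled $0$) together with $q$ blooms appended to it, and since the blooms are indistinguishable, there is exactly one such tree; the empty product on the right-hand side is indeed $1$. For the inductive step, suppose the formula holds for $n-1$; I want to count the $(q,r)$-blooming trees on the label set $\{0,1,\dotsc,n\}$ in terms of those on $\{0,1,\dotsc,n-1\}$. Given a $(q,r)$-blooming tree $\T$ on $n+1$ labeled nodes, the node labeled $n$ is necessarily a leaf among the labeled nodes (by the increasing-path condition it has no labeled children, and it carries $r$ blooms of its own). Deleting node $n$ — together with its $r$ blooms — yields a $(q,r)$-blooming tree on $\{0,\dotsc,n-1\}$. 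The key step is to count, for each such smaller tree $\T'$, the number of ways to reinsert the node labeled $n$: I must choose a labeled node $v$ of $\T'$ to serve as the parent of $n$ (any of the $n$ labeled nodes is allowed, since $n$ exceeds every existing label), and then choose a position for $n$ in the total order of the children of $v$. If $v$ is the root, it currently has some labeled children together with $q$ blooms, and inserting one more child into an ordered list that already contains $q$ blooms as fixed separators — here I would make precise the bookkeeping — gives the count; if $v$ is a non-root labeled node, the same count applies with $q$ replaced by $r$.

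The heart of the argument, and the step I expect to require the most care, is this reinsertion count: I claim that across all $n$ choices of parent $v$, the total number of insertion positions is exactly $q+1+(r+2)(n-1)$, independent of the shape of $\T'$. The reason is a global counting identity. In $\T'$ there are $n$ labeled nodes; each non-root labeled node contributes exactly one edge to its parent, so there are $n-1$ "slots" occupied by labeled non-root nodes among the various child-lists. A fresh child inserted as the new maximum-labeled node can go into any of the child-lists, and the number of insertion positions in a list with $k$ children (labeled nodes plus blooms) is $k+1$. Summing $k+1$ over all $n$ labeled nodes: the total number of children summed over all nodes is (number of labeled non-root nodes) $+$ (number of blooms) $= (n-1) + \bigl(q + r(n-1)\bigr)$, and adding $1$ for each of the $n$ nodes gives
\[
  (n-1) + q + r(n-1) + n = q + 1 + (r+2)(n-1),
\]
which does not depend on $\T'$. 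Hence the number of $(q,r)$-blooming trees on $n+1$ labeled nodes equals $\bigl(q+1+(r+2)(n-1)\bigr)$ times the number on $n$ labeled nodes, and the product formula follows by induction.

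One subtlety to address carefully is the role of the blooms as indistinguishable objects: since blooms are unlabeled and mutually indistinguishable, two insertions of node $n$ that land in "the same gap relative to the labeled children" but on opposite sides of a single bloom must be counted as distinct — and they are, because the relative order of $n$ with respect to each bloom is part of the data, exactly as in Definition \ref{def:blooming-tree}. Thus "insertion position in a child-list with $k$ ordered children" genuinely means one of $k+1$ gaps, even when some of those children are blooms, and the count above is correct. With that point pinned down, the induction goes through cleanly and matches, for $n+1$ labeled nodes, the asserted product $\prod_{i=0}^{n-1}(q+1+(r+2)i)$.
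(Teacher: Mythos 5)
Your proof is correct and follows essentially the same route as the paper: an induction on $n$ in which the node with the largest label is removed (or, equivalently, inserted), and the number of insertion positions in a $(q,r)$-blooming tree on $n$ labeled nodes is counted to be $2n-1+q+(n-1)r = q+1+(r+2)(n-1)$, independent of the shape of the tree. The paper states this count directly without spelling out the ``sum of $(k+1)$ over child-lists'' accounting, but the underlying argument is the same.
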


\begin{proof}
  The proof is by induction on $n$.
  For $n=0$ there is only one $(q,r)$-blooming tree, consisting of the root with $q$ blooms attached to it.
  Let $T$ be any $(q,r)$-blooming tree on $n$ nodes with $n\geq 1$.
  The tree $T$ has $q+(n-1)r$ blooms, so there are exactly $2n-1+q+(n-1)r$ positions where an additional node with label $n$ can be attached (together with its $r$ new blooms) in order to obtain a $(q,r)$-blooming tree on $n+1$ nodes.
  Every $(q,r)$-blooming tree on $n+1$ nodes is obtained exactly once in this way.
\end{proof}

For $q=r=0$ one obtains the classical result \cite{chen1994heap, gessel1995enumeration, klazar1997twelve} which states that the number of increasing ordered trees on $n+1$ nodes is $(2n-1)!!$.

\begin{theorem}[Homotopy type of $\bar \D_n(G,S)$]
  The order complex of the poset $\bar\D_n(G,S)$ is homotopy equivalent to a wedge of
  \[ (-1)^\epsilon \, \prod_{i=0}^{n-1} (|S|-1 + |G|i) \]
  $(n-1-\epsilon)$-dimensional spheres (where $\epsilon$ is defined in eq.\ \eqref{eq:epsilon}), except for the empty poset $\bar\D_1(\{e\}, \varnothing)$.
  \label{thm:homotopy-type}
\end{theorem}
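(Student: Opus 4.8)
The plan is to count the decreasing maximal chains of $\hat\D_n(G,S)$ with respect to the EL-labeling $\lambda$ of Definition~\ref{def:dowling-labeling}, and to show that this count equals $(-1)^\epsilon\prod_{i=0}^{n-1}(|S|-1+|G|i)$, which by the general theory recalled in Section~\ref{sec:shellability} is the number of spheres. (The evaluation of the characteristic polynomial already gives this number, so strictly speaking one could stop there; but the point of this section is the bijective argument, so I would carry that out.) The key is to set up a bijection between decreasing maximal chains of $\hat\D_n(G,S)$ and a suitable family of decorated $(q,r)$-blooming trees on $n+1$ labeled nodes, for the right values of $q$ and $r$, and then invoke Lemma~\ref{lemma:number-trees}.

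First I would analyze what a decreasing maximal chain $\gamma = (\zero \precdot p_1 \precdot \dots \precdot p_N \precdot \one)$ looks like. Its label word $\lambda(\gamma)$ must be weakly decreasing. Reading the label types $(0,\cdot)$, $(2,\cdot,\cdot)$, $(1,\cdot)$ in lexicographic order, a decreasing word can contain at most one coherent merge (label $(0,\cdot)$), it must end with the edge to $\one$ (label $(1,2)$), and the colored edges (labels $(1,k)$) come in a contiguous weakly-decreasing-by-color block just before the end; all the rest are non-coherent merges with labels $(2,\cdot,\cdot)$ arranged in weakly decreasing order. I would show that in fact there can be \emph{no} coherent merge at all in a maximal decreasing chain (a coherent merge has the smallest possible label type and cannot be followed by anything smaller except another coherent merge, which is incompatible with maximality / the structure forced below), so a decreasing chain consists of a sequence of non-coherent merges followed by a sequence of colored edges followed by the top edge. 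The combinatorial data of the merge part is naturally encoded by an increasing ordered forest structure on $[n]$: each merge records a pair $(\min, \alpha)$ with $\alpha \in G\setminus\{e\}$, and "weakly decreasing in the second coordinate $\min C$" together with "strictly decreasing in $\alpha$ within fixed $\min$" is exactly the condition that, when we build the blocks bottom-up, each block becomes a node whose children (the blocks merged into it, in the order they were merged, each carrying a label in $G\setminus\{e\}$) form an increasing ordered structure. Meanwhile the colored edges assign to each remaining block an element of $S$, subject to the constraint that distinct blocks get distinct colors arranged decreasingly — and, crucially, the global constraint that the whole chain go from $\zero$ all the way to $\one$, which forces every block to eventually be either merged or colored.

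The bookkeeping then runs as follows. A decreasing maximal chain is equivalent to: a rooted forest on vertex set $\{1,\dots,n\}$ where each non-root vertex carries a "type" in $G\setminus\{e\}$ if it is attached by a merge; together with a choice, for each root block, of either making it the unique uncolored block that gets absorbed into $\one$ via the single top edge, or giving it a color in $S$ — with the colors of the colored root blocks being distinct. Adding a formal root node $0$ above everything turns the forest into a tree on $n+1$ nodes; the $G\setminus\{e\}$ labels on edges below non-root nodes, together with blooms encoding the ordering/separation data at each node, give exactly a $(|G|-1$ choices per internal edge$)$-decorated $(q,r)$-blooming tree for appropriate $q,r$ — I expect $r = |G|-2$ (each labeled non-root node has $|G|-1$ merge-slots which, minus the node itself, leaves $|G|-2$ blooms) and $q$ depending on $|S|$, so that Lemma~\ref{lemma:number-trees} yields $\prod_{i=0}^{n-1}(q+1+|G|i)$ with $q+1 = |S|-1$ (resp.\ $q+1=|S|$ with a shift by one in the product when $S=\varnothing$, accounting for $\epsilon$). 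I would then carefully match the two endpoints: for $S\neq\varnothing$ the product runs $i=0,\dots,n-1$ with leading factor $|S|-1$, and the sign $(-1)^\epsilon=1$; for $S=\varnothing$ one colored edge is impossible, the rank drops by one, the product effectively starts at $i=1$, and a sign $(-1)$ appears because $\chi(1)$ is evaluated at rank $n-1$ — reproducing the $\epsilon=1$ formula.

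The main obstacle I anticipate is not the tree-counting (Lemma~\ref{lemma:number-trees} does the heavy lifting) but rather pinning down \emph{exactly} which configurations of non-coherent-merge-then-color sequences are both decreasing \emph{and} realizable as chains inside $\D_n(G,S)$ — in particular verifying that the $\alpha$-values attached by successive non-coherent merges at a common minimum are genuinely free to range over all of $G\setminus\{e\}$ in strictly decreasing order (this is where Lemma~\ref{lemma:non-coherent-increasing-chains}, or rather its "decreasing" mirror, is needed to guarantee the merges remain non-coherent all the way up and hence that the $\alpha$'s are forced to be distinct), and that no hidden coherent merge can be inserted. Handling the $S=\varnothing$ and $|S|=1$ degeneracies (where $\bar\D_n$ is bounded above and the product correctly vanishes, matching a contractible order complex) is a routine but necessary check at the end.
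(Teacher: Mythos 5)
Your strategy is the same as the paper's: count decreasing maximal chains from $\zero$ to $\one$ in $\hat\D_n(G,S)$ by a bijection with $(|S|-2,\,|G|-2)$-blooming trees on $n+1$ nodes, then invoke Lemma~\ref{lemma:number-trees}. The core insight — no coherent merges can occur in a decreasing chain (any label $(0,\cdot)$ would force all later labels, including the final $(1,2)$ to $\one$, to also be $(0,\cdot)$), so the chain is a run of non-coherent merges, then colored edges with colors $\geq s_2$, then the top edge — and the parameters $q=|S|-2$, $r=|G|-2$ are exactly right, and you correctly guess that ``node $0$'' plays the role of the zero block.

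Where you need to be more careful is precisely in the degenerate cases that you wave away as routine. First, when $|G|=1$, you cannot apply Lemma~\ref{lemma:number-trees} with $r=|G|-2=-1$; the paper handles this separately by noting there are no non-coherent merges at all, so a decreasing chain is a permutation of $[n]$ together with a weakly decreasing sequence of $n$ colors in $\{s_2,\dots,s_m\}$, giving $n!\binom{n+|S|-2}{n}$. Second, when $S=\varnothing$ there is no ``node $0$'' — the correct bijection is with $(|G|-2,\,|G|-2)$-blooming trees on $n$ nodes labeled $1,\dots,n$, not $n+1$; the sign $(-1)^\epsilon$ then comes out because $\prod_{i=0}^{n-2}(|G|-1+|G|i)=-\prod_{i=0}^{n-1}(|S|-1+|G|i)$ when $|S|=0$, not because of anything about ``$\chi(1)$ evaluated at rank $n-1$''. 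Third, you conflate two distinct degeneracies: $|S|=1$ is the case where $\bar\D_n$ has a top element and the order complex is contractible (and the product vanishes because of the factor $|S|-1=0$), whereas $S=\varnothing$ gives a genuine wedge of $(n-2)$-spheres. Finally, your worry about needing a decreasing analogue of Lemma~\ref{lemma:non-coherent-increasing-chains} is unfounded: the realizability of the chain is established directly by exhibiting $\psi^{-1}$, which constructs the chain edge by edge; within a fixed $\min C$ the strict decrease of the $\alpha$-values is forced by the weak decrease of the full labels, so no separate lemma is needed.
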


\begin{proof}
  We want to compute the cardinality of the set $D$ of the decreasing maximal chains from $\zero$ to $\one$ in $\hat\D_n(G,S)$.
  A chain $\gamma \in D$ consists of:
  \begin{itemize}
    \item a sequence of non-coherent edges, labeled $(2,*,*)$;
    \item then, a sequence of colored edges, labeled $(1,*)$ with $* \geq 2$;
    \item finally, one edge labeled $(1,2)$.
  \end{itemize}
  
  \smallskip
  \textbf{Case 1: $|G| = 1$.}
  In this case, non-coherent edges do not exist.
  Then a decreasing chain $\gamma \in D$ is determined by a permutation of $[n]$ (which encodes the order in which the elements of $[n]$ are colored) and a decreasing sequence of $n$ labels
  \[ (1,|S|) \geq (1,h_1) \geq (1,h_2) \geq \dotsb \geq (1,h_n) \geq (1,2) \]
  (which are to be assigned to the colored edges of $\gamma$).
  The number of decreasing chains in $\D_n(\{e\}, S)$ is therefore
  \[ n! \cdot \binom{n+|S|-2}{n} = \frac{(n+|S|-2)!}{(|S|-2)!} \]
  if $|S| \geq 2$, and $0$ if $|S| \leq 1$.
  From now on, assume $|G| \geq 2$.
    
  \smallskip
  \textbf{Case 2: $|S| = 1$.}
  Every chain contains at least one colored edge which is labeled $(1,1)$.
  Thus there are no decreasing chains.
  
  \smallskip
  \textbf{Case 3: $|G| \geq 2$ and $|S| \geq 2$.}
  We are going to construct a bijection $\psi$ between $D$ and the set $\T$ of $(|S|-2,\, |G|-2)$-blooming trees on $n+1$ nodes.
  Let $S = \{ s_1 < s_2 < \dotsb < s_m \}$ and $G\setminus \{e\} = \{ g_1 < g_2 < \dotsb < g_k\}$.
  The idea is: for every non-coherent merge of blocks $A$ and $B$, there is an edge between $\min A$ and $\min B$; for every $S$-coloring of a block $B$, there is an edge between $0$ and $\min B$; the blooms indicate when to stop using a certain $s_i$ (or $g_i$) and start using $s_{i-1}$ (or $g_{i-1}$).
  
  The bijection $\psi\colon D\to \T$ is defined as follows.
  Let $\gamma \in D$ be a decreasing chain.
  In order to construct the tree $\psi(\gamma) \in \T$, start with a disconnected graph on $n+1$ vertices labeled $0,1,\dotsc,n$.
  Every time we say ``attach the node $u$ to the node $v$'' we mean ``create an edge between $v$ and $u$, so that $u$ becomes the \emph{last} child of $v$ in the total order of the children of $v$.''
  
  First examine the colored edges along $\gamma$, in order.
  For each colored edge $(x,y)$, which colors a block $B$ and has label $\lambda(x,y) = (1,i)$, do the following.
  \begin{enumerate}
    \item If the root (i.e.\ the node $0$) has less than $m-i$ blooms, attach a new bloom to it; repeat this step until the root has exactly $m-i$ blooms.
    \item Attach the node $\min B$ to the root.
  \end{enumerate}
  Notice that, by monotonicity of the labels along $\gamma$, the number of blooms required in step (1) is weakly increasing and it varies between $0$ and $m-2$.

  Then examine the non-coherent edges along $\gamma$, in order.
  For each non-coherent edge $(x,y)$, which merges blocks $A$ and $B$ with $\min A < \min B$ and has label $\lambda(x,y) = (2,\min A, g_i)$, do the following.
  \begin{enumerate}
    \item If the node $\min A$ has less than $k-i$ blooms, attach a new bloom to it; repeat this step until the node $\min A$ has exactly $k-i$ blooms.
    \item Attach the node $\min B$ to the node $\min A$.
  \end{enumerate}
  Again, by monotonicity of the labels along $\gamma$, the number of blooms required in step (1) is weakly increasing and it varies between $0$ and $k-1 = |G|-2$.
  
  Attach new blooms to the tree, so that the root has $m-2$ blooms and every other labeled node has $k-1$ blooms.
  Define $\psi(\gamma)$ as the tree resulting from this construction.
  See Figure \ref{fig:bijection-example} for an example.
  
  \begin{figure}
    \begin{tikzpicture}[baseline]
      \node (5) {$\one$};
      \node [below = 0.3cm of 5] (4) {$[\varnothing \parallel 1_{s_2} 2_{g_2\cdot s_2} 3_{s_3} 4_{g_1 \cdot s_2}]$};
      \node [below = 0.3cm of 4] (3) {$[1_e 2_{g_2} 4_{g_1} \parallel 3_{s_3}]$};
      \node [below = 0.3cm of 3] (2) {$[1_e 2_{g_2} 4_{g_1} \mid 3_e \parallel \varnothing]$};
      \node [below = 0.3cm of 2] (1) {$[1_e 2_{g_2} \mid 3_e \mid 4_e \parallel \varnothing]$};
      \node [below = 0.3cm of 1] (0) {$[1_e \mid 2_e \mid 3_e \mid 4_e \parallel \varnothing]$};
      \draw (0) -- (1);
      \draw (1) -- (2);
      \draw (2) -- (3);
      \draw (3) -- (4);
      \draw (4) -- (5);
    \end{tikzpicture}
    \qquad
    \begin{tikzpicture}[baseline=0.5cm]
      \begin{scope}[style=treestylewide]
      \node [circle,draw] (0) {0}
	child {node [circle,draw] (3) {3}
	}
	child {node [circle,draw] (1) {1}
	  child {node [circle,draw] (2) {2}
	  }
	  child {node [circle,draw] (4) {4}
	  }
      };
      \end{scope}
      \begin{scope}[style=bloomstyle]
      \node [left = 0.5cm of 3] (a) {};
      \node [left = 0.5cm of a] (b) {};
      \node [below = 0.8cm of 0] (c) {};
      \node [below = 0.5cm of 3] (d) {};
      \node [below = 0.8cm of 1] (e) {};
      \node [below = 0.5cm of 2] (f) {};
      \node [below = 0.5cm of 4] (g) {};
      \draw (0) -- (a);
      \draw (0) -- (b);
      \draw (0) -- (c);
      \draw (3) -- (d);
      \draw (1) -- (e);
      \draw (2) -- (f);
      \draw (4) -- (g);      
      \end{scope}
      \node [right = 0.5cm of 2] (fake) {};
    \end{tikzpicture}
    
    \caption{A decreasing chain (for $n=4$, $|G|=3$, and $|S|=5$), and the corresponding $(3,1)$-blooming tree.}
    \label{fig:bijection-example}
  \end{figure}
  
  To prove that $\psi$ is a bijection, we explicitly define its inverse $\psi^{-1}\colon \T \to D$.
  Let $T \in \T$ be a tree.
  Start with $\gamma = (\zero)$ (a chain with one element).
  Consider the set of couples
  \[ \{ (u,v) \mid \text{$u,v$ are labeled nodes of $T$, and $v$ is a child of $u$} \}, \]
  totally ordered by: $(u_1,v_1) < (u_2,v_2)$ if $u_1 > u_2$, or $u_1=u_2$ and $v_1$ comes before $v_2$ in the total order of the children of $u_1=u_2$.
  Each of the nodes $1,\dotsc, n$ appears exactly once as the second entry of a couple $(u,v)$.
  With $(u,v)$ running through this ordered set of couples, do the following.
  \begin{itemize}
    \item Let $i$ be the number of blooms attached to $u$ which come before $v$ in the total order of the children of $u$.
    
    \item If $x \in \D_n(G,S)$ is the last element of $\gamma$, construct $y \succdot x$ as follows.
    \begin{itemize}
      \item \emph{Case $u>0$.}
      Merge the block containing $u$ and the block containing $v$ so that $\lambda(x,y) = (2,u,g_{k-i})$.
      
      \item \emph{Case $u=0$.}
      Color the block containing $v$ so that $\lambda(x,y) = (1,m-i)$.
    \end{itemize}
    In both cases, the element $y \succdot x$ is uniquely determined by the given conditions.
    
    \item Extend $\gamma$ by adding $y$ after $x$.
  \end{itemize}
  Extend $\gamma$ once more, by adding $\one$ as the last element.
  Then $\psi^{-1}(T) = \gamma$.
  
  By Lemma \ref{lemma:number-trees}, the number of $(|S|-2, \, |G|-2)$-blooming trees on $n+1$ nodes is
  \[ \prod_{i=0}^{n-1} (|S|-1 + |G|i). \]
  This is also the cardinality of $D$.
  
  \smallskip
  \textbf{Case 4: $S = \varnothing$.}
  Differently from before, in this case there are no colored edges, and the zero blocks are always empty.
  Then $D$ is in bijection with the set of $(|G|-2, \, |G|-2)$-blooming trees on $n$ vertices labeled $1,2,\dotsc, n$.
  The bijection is constructed as in the case $|S| \geq 2$, except that there is no special ``node $0$'' anymore.
  By Lemma \ref{lemma:number-trees}, the number of such trees is
  \[ \prod_{i=0}^{n-2} (|G|-1 + |G|i) = \prod_{i=1}^{n-1} (-1 + |G|i) = - \prod_{i=0}^{n-1} (-1 + |G|i). \]
  This is also the cardinality of $D$.
\end{proof}

In the case of Dowling lattices $\D_n(G)$, obtained by setting $|S|=1$, Theorem \ref{thm:homotopy-type} is trivial because $\D_n(G)$ contains a top element $(\varnothing, \hat z)$.
The subposet $\tilde\D_n (G) = \D_n(G) \setminus \{\zero, (\varnothing, \hat z)\}$ is shellable, and its order complex is homotopy equivalent to a wedge of spheres in bijection with the decreasing chains from $\zero$ to $(\varnothing, \hat z)$.
By an argument similar to that of Theorem \ref{thm:homotopy-type}, these decreasing chains are counted by $(0, \, |G|-2)$-blooming trees on $n+1$ vertices.
By Lemma \ref{lemma:number-trees}, their number is
\[ \prod_{i=0}^{n-1} (1 + |G|i). \]
A similar description of the generators of the homology of Dowling lattices was given in \cite[Section 4]{gottlieb2000cohomology}, in terms of labeled forests.
The formula for the number of generators was first found in \cite{dowling1973class}.

\section{Shellability of subposets}
\label{sec:subposets}

We now turn our attention to the subposet $\P_n(G,S,T)$ of $\D_n(G,S)$ introduced in Section \ref{sec:generalized-dowling-posets}, where $T$ is a $G$-invariant subset of $S$.
Throughout this section, assume $n\geq 2$ (because $\P_1(G,S,T) = \D_1(G,T)$) and $|S| \geq 1$.
Then $\rk \P_n(G,S,T) = n$.
Also, let $\bar \P_n(G,S,T) = \P_n(G,S,T) \setminus \{ \zero \}$.

We are going to prove that $\P_n(G,S,T)$ is shellable if $G$ acts trivially on $S \setminus T$.
In general, however, $\P_n(G,S,T)$ is not shellable.
In the first part of this section we construct a wide family of non-shellable examples (Proposition \ref{prop:trivial-stabilizers}), whereas in the second part we prove shellability if the $G$-action on $S\setminus T$ is trivial (Theorem \ref{thm:subposet-shellability}).
Let us start with two simple examples.

\begin{example}
  Let $G = \Z_2 = \{e, g\}$ act non-trivially on $S = \{+, -\}$, as in \cite[Example 2.2.3]{bibby2018combinatorics}.
  The Hasse diagram of $\P_2(G, S, \varnothing)$ is shown in Figure \ref{fig:subposet-Z2}.
  The order complex of $\bar \P_2(G,S,\varnothing)$ is $1$-dimensional and disconnected, therefore it is not shellable.
  This example is a special case of Proposition \ref{prop:trivial-stabilizers} below.
  In view of Theorem \ref{thm:subposet-shellability}, this is the smallest non-shellable example.
  \label{example:subposet-Z2}
\end{example}

\begin{figure}
  \centering
  \begin{tikzpicture}
    \node (0) {$[1_e \mid 2_e \parallel \varnothing]$};
    \node [above = 1cm of 0] (1) {};
    \node [left = 0.6cm of 1] (1c) {$[1_e 2_e \parallel \varnothing]$};
    \node [right = 0.6cm of 1] (1d) {$[1_e 2_g \parallel \varnothing]$};
    \node [above = 1cm of 1] (2) {};
    \node [left = 0.2cm of 2] (2b) {$[\varnothing \parallel 1_- 2_-]$};
    \node [left = 0.5cm of 2b] (2a) {$[\varnothing \parallel 1_+ 2_+]$};
    \node [right = 0.2cm of 2] (2c) {$[\varnothing \parallel 1_+ 2_-]$};
    \node [right = 0.5cm of 2c] (2d) {$[\varnothing \parallel 1_- 2_+]$};    
    \draw (0.north) -- (1c.south);
    \draw (0.north) -- (1d.south);
    \draw (1c.north) -- (2a.south);
    \draw (1c.north) -- (2b.south);
    \draw (1d.north) -- (2c.south);
    \draw (1d.north) -- (2d.south);
  \end{tikzpicture}

  \caption{The Hasse diagram of $\P_2(\Z_2, \{+,-\}, \varnothing)$ where $\Z_2$ acts non-trivially on $\{+,-\}$ (see Example \ref{example:subposet-Z2}).}
  \label{fig:subposet-Z2}
\end{figure}

\begin{example}
  Let $G = \Z_4 = \{e, g, g^2, g^3\}$ act non-trivially on $S = \{+, -\}$.
  The Hasse diagram of $\P_2(G, S, \varnothing)$ is shown in Figure \ref{fig:subposet-Z4}.
  As in Example \ref{example:subposet-Z2}, the order complex $\Delta$ of $\bar \P_2(G,S,\varnothing)$ is $1$-dimensional and disconnected, therefore it is not shellable.
  In this example, $\Delta \simeq S^1 \sqcup S^1$ is not even a wedge of spheres.
  \label{example:subposet-Z4}
\end{example}

\begin{figure}
  \centering
  \begin{tikzpicture}
    \node (0) {$[1_e \mid 2_e \parallel \varnothing]$};
    \node [above = 1cm of 0] (1) {};
    \node [right = 0.2cm of 1] (1b) {$[1_e 2_g \parallel \varnothing]$};
    \node [left = 0.2cm of 1] (1c) {$[1_e 2_{g^2} \parallel \varnothing]$};
    \node [left = 0.5cm of 1c] (1a) {$[1_e 2_e \parallel \varnothing]$};
    \node [right = 0.5cm of 1b] (1d) {$[1_e 2_{g^3} \parallel \varnothing]$};
    \node [above = 1cm of 1c] (2b) {$[\varnothing \parallel 1_- 2_-]$};
    \node [above = 1cm of 1a] (2a) {$[\varnothing \parallel 1_+ 2_+]$};
    \node [above = 1cm of 1b] (2c) {$[\varnothing \parallel 1_+ 2_-]$};
    \node [above = 1cm of 1d] (2d) {$[\varnothing \parallel 1_- 2_+]$};    
    \draw (0.north) -- (1a.south);
    \draw (0.north) -- (1b.south);
    \draw (0.north) -- (1c.south);
    \draw (0.north) -- (1d.south);
    \draw (1a.north) -- (2a.south);
    \draw (1a.north) -- (2b.south);
    \draw (1b.north) -- (2c.south);
    \draw (1b.north) -- (2d.south);
    \draw (1c.north) -- (2a.south);
    \draw (1c.north) -- (2b.south);
    \draw (1d.north) -- (2c.south);
    \draw (1d.north) -- (2d.south);
  \end{tikzpicture}
  
  \caption{The Hasse diagram of $\P_2(\Z_4, \{+,-\}, \varnothing)$ where $\Z_4$ acts non-trivially on $\{+,-\}$ (see Example \ref{example:subposet-Z4}).}
  \label{fig:subposet-Z4}
\end{figure}

We are going to prove a ``reduction lemma'' that allows to construct a wide family of non-shellable examples.
At the same time, it gives interesting homotopy equivalences between subposets of different $S$-Dowling posets.

\begin{lemma}[Orbit reduction]
  Suppose that $O \subseteq S\setminus T$ is a $G$-orbit with a trivial stabilizer (i.e.\ $|O| = |G|$).
  Then there is a homotopy equivalence $\bar \P_n(G,S,T) \simeq \bar \P_n(G, S\setminus O, T)$.
  \label{lemma:reduction-lemma}
\end{lemma}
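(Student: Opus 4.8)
I want to exhibit an explicit poset map $\phi\colon \bar\P_n(G,S,T) \to \bar\P_n(G,S\setminus O,T)$ and show it induces a homotopy equivalence on order complexes, via Quillen's fiber lemma (or its closed-interval variant). The map $\phi$ should ``forget'' the distinction between the colors in $O$: since $O$ has trivial stabilizer, $|O| = |G|$, and the key observation is that a zero block entirely colored by elements of a single $G$-orbit $O$ is the same data as a $G$-colored (non-zero) block. Concretely, given $(\tilde\beta, z) \in \P_n(G,S,T)$, look at the part of the zero block colored into $O$, say $Z_O = z^{-1}(O)$. Because $|Z_O| \neq 1$ (this is the defining condition of $\P$, as $O \subseteq S\setminus T$), we can either have $Z_O = \varnothing$ or $|Z_O| \geq 2$. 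Fix the element $s_0 \in O$ with $s_0 = f(e)$; the coloring $z|_{Z_O}\colon Z_O \to O$ is then of the form $i \mapsto b(i)\cdot s_0$ for a \emph{unique} projectivized $G$-coloring $b\colon Z_O \to G$ (uniqueness up to $\sim$ because $O$ has trivial stabilizer — this is exactly where the hypothesis is used). So I define $\phi(\tilde\beta, z)$ by replacing the colored part $Z_O$ of the zero block with a new non-zero block $\widetilde{Z_O} = (Z_O, \bar b)$, and keeping everything else unchanged; when $Z_O = \varnothing$ nothing happens. One checks $\phi$ lands in $\P_n(G, S\setminus O, T)$ and is order-preserving (a merge or color move upstairs maps to a merge or color move, or becomes trivial, downstairs) and is surjective, hence restricts to $\bar\P_n \to \bar\P_n$.

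**Why it's a homotopy equivalence.** I plan to apply Quillen's Theorem A in the poset form: it suffices to show that for every $y \in \bar\P_n(G,S\setminus O,T)$ the fiber $\phi^{-1}(\bar\P_{\leq y})$ (the subposet of elements mapping weakly below $y$) is contractible. Equivalently, and more convenient here, I would show each fiber $\phi^{-1}(y)$ is contractible with a maximum or minimum element, and that $\phi$ restricted over each principal order ideal behaves well. Let me describe the fiber over a fixed $y = (\tilde\gamma, z') \in \P_n(G,S\setminus O,T)$. An element $x$ with $\phi(x) = y$ is obtained by choosing which (if any) of the blocks of $\tilde\gamma$ to ``reinterpret'' as an $O$-colored zero block: a non-zero block $\tilde B$ of $y$ can be pulled back to $x$ either as the same non-zero block, or (if $B$ is \emph{not} already part of $y$'s zero structure and the merge history allows) as a zero block colored by $O$ via $i \mapsto b(i)\cdot s_0$. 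Singleton blocks $\{j\}$ of $\tilde\gamma$ can go to $\{j\}$ (non-zero) or to $j$ with zero-color $s_0$; but the condition $|z^{-1}(O)| \neq 1$ forbids making a \emph{single} singleton into an $O$-colored zero element in isolation unless it merges with others. This makes the fiber combinatorially a poset of ``choices of a subset of blocks to recolor, subject to a no-singleton constraint,'' and I expect it to have a cone point — namely the element of $\phi^{-1}(y)$ that recolors \emph{nothing} into the zero block, i.e.\ keeps all of $y$'s relevant blocks non-zero (this is always a valid element of $\P_n(G,S,T)$ and is comparable to every other element of the fiber). Contractibility of the fibers then gives the homotopy equivalence.

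**The main obstacle.** The delicate point is the interaction between the zero-block condition $|z^{-1}(O)|\neq 1$ and the order structure: moving down the poset (toward $\zero$) one can \emph{split} a zero block or a non-zero block, and I must make sure that the fiber description is consistent across all of $\bar\P_{\leq y}$ simultaneously, not just over a single $y$ — i.e.\ that the fibers assemble into a contraction compatible with the order. This is why I'd rather phrase the argument through Quillen's lemma applied to one of the two projections in a ``graph of $\phi$'' style comparison, or through a pair of order-preserving maps $\phi$ and a section-like $\iota$ in the reverse direction with $\iota\circ\phi \geq \id$ and $\phi\circ\iota = \id$ (an adjunction), which would immediately yield a homotopy equivalence by the order homotopy lemma (\cite{Bjorner1980} or \cite{Stanley}): if I can produce $\iota\colon \bar\P_n(G,S\setminus O,T) \to \bar\P_n(G,S,T)$, namely the ``keep everything non-zero, reinterpret $s_0$-colored zero elements as a $G$-colored block'' map, then $\phi\circ\iota = \id$ on the nose and $\iota\circ\phi(x) \succeq x$ for all $x$ — because $\iota\circ\phi$ turns an $O$-colored zero block of $x$ into a non-zero block, which is a ``color'' covering relation reversed, hence lies above $x$. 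Verifying this inequality $\iota\circ\phi \geq \id$ carefully (in particular that it is well-defined, i.e.\ $\iota\circ\phi(x)$ is still $\geq \zero$ but $\neq \zero$, and that the comparison holds as stated rather than $\preceq$) is the one computation I would actually need to do with care; everything else is bookkeeping. Once that inequality is in hand, the homotopy equivalence $\bar\P_n(G,S,T)\simeq\bar\P_n(G,S\setminus O,T)$ follows from the standard fact that a Galois connection (adjunction) between posets induces a homotopy equivalence of order complexes.
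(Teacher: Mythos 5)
Your strategy is the same as the paper's: the map $\phi$ you construct --- converting the $O$-colored part $Z_O$ of the zero block into a new non-zero block with $G$-coloring $\varphi^{-1}\circ z|_{Z_O}$ --- is exactly the paper's map $f$, and the paper finishes by applying the closure-operator lemma (\cite[Corollary 10.12]{bjorner1995topological}) rather than Quillen's Theorem~A directly, which is the adjunction route you end up preferring.

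Two corrections to your adjunction sketch, one of which you yourself flagged as needing care. First, the inequality is in the wrong direction: coloring a block is the \emph{upward} ``color'' covering relation in $\D_n(G,S)$, so $\phi$ undoes a coloring step and therefore moves \emph{down}; hence $\iota\circ\phi = f$ satisfies $f(x)\preceq x$, making $f$ a \emph{descending} closure operator, not an ascending one. The homotopy equivalence still follows (descending closure operators work just as well), but your claim that $\iota\circ\phi\succeq\id$ is false. Second, $\iota$ should simply be the inclusion $\P_n(G,S\setminus O,T)\hookrightarrow\P_n(G,S,T)$; the description you give for $\iota$ (``reinterpret $s_0$-colored zero elements as a $G$-colored block'') is actually a description of $\phi$, since there are no $O$-colored zero elements in the target poset. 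Finally, you are right that the check $\phi(x)\neq\zero$ for $x\neq\zero$ is the one place where the defining condition of $\P_n(G,S,T)$ enters: $|z^{-1}(O)|\neq 1$ forces $|Z_O|\geq 2$ whenever $Z_O\neq\varnothing$, so the newly created block is non-trivial and $\phi(x)\succ\zero$. The paper records this as $f^{-1}(\zero)=\{\zero\}$, which is exactly what is needed for $f$ to restrict to $\bar\P_n$.
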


\begin{proof}
  Fix an element $\bar s \in O$.
  Since $O$ has a trivial stabilizer, this choice induces a bijection $\smash{\varphi \colon G \xrightarrow{\cong} O}$ given by $g \mapsto g\cdot \bar s$.
  We are going to construct a (descending) closure operator $f \colon \P_n(G,S,T) \to \P_n(G, S\setminus O, T) \subseteq \P_n(G,S,T)$, i.e.\ an order-preserving map satisfying $f(x) \leq x$ for all $x \in \P_n(G,S,T)$.
  
  Let $x = (\tilde \beta, z) \in \P_n(G,S,T)$, and let $Z$ be the zero block of $x$.
  Consider the subset $Z_O = \{ i \in Z \mid z(i) \in O \}$ of $Z$ consisting of the elements colored by the orbit $O$.
  To define $f(x)$, remove $Z_O$ from the zero block and add $B = Z_O$ as a non-zero block with $G$-coloring $b = \varphi^{-1} \circ z|_{Z_O}$:
  \[ f(x) = (\tilde \beta \cup \{ \tilde B \}, \, z|_{Z \setminus Z_O}). \]
  Notice that $f$ does not depend on the initial choice of $\bar s \in O$.
  Indeed, different choices of $\bar s$ yield $\sim$\,-equivalent $G$-colorings of $B$.
  
  Clearly $f(x) \leq x$, because either $f(x) = x$ (if $Z_O = \varnothing$) or $x$ can be obtained from $f(x)$ by coloring the block $B$.
  Also, $f$ is order-preserving:
  \begin{itemize}
    \item if $(x,y) \in \E(\P_n(G,S,T))$ is an edge of type ``merge'', or is an edge of type ``color'' which uses colors in $S \setminus O$, then $(f(x), f(y))$ is an edge of the same type;
    \item if $(x,y) \in \E(\P_n(G,S,T))$ is an edge of type ``color'' which uses colors in $O$, then either $x=f(x)=f(y)$ (if $Z_O = \varnothing$ in $x$) or $(f(x), f(y))$ is an edge of type ``merge''.
  \end{itemize}
  In addition, $f$ is the identity on $\P_n(G, S\setminus O, T)$, and so $f$ is surjective.
  
  In the definition of $f(x)$, either $f(x) = x$ (if $Z_O = \varnothing$) or $f(x)$ contains the block $B$ which has at least $2$ elements (because $O \subseteq S\setminus T$; see the definition of the subposet $\P_n(G,S,T)$).
  Therefore $f^{-1}(\zero) = \{\zero\}$.
  Then $f$ restricts to a surjective closure operator $\bar f \colon \bar \P_n(G,S,T) \to \bar \P_n(G,S\setminus O, T)$.
  By \cite[Corollary 10.12]{bjorner1995topological}, $\bar f$ induces a homotopy equivalence between the associated order complexes.
\end{proof}

\begin{remark}
  The closure operator $f$ of the previous proof also satisfies $f^2 = f$.
  Then, by \cite[Theorem 2.1]{kozlov2006simple}, we actually have that the order complex of $\bar \P_n(G,S,T)$ collapses onto the order complex of $\bar \P_n(G, S\setminus O, T)$.
\end{remark}

\begin{remark}
  The combinatorics of $\bar\P_n(G,S,T)$ is related to the topology of the complement of  an arrangement $\A_n(G,X)$ of singular subspaces in $X^n$.
  In this sense an orbit $O$ as in the statement of Lemma \ref{lemma:reduction-lemma} is ``redundant'', as it consists of non-singular points.
  This provides a topological interpretation of Lemma \ref{lemma:reduction-lemma}.
\end{remark}

\begin{proposition}
  Let $n \geq 2$ and $|S| \geq 1$.
  Suppose that all the $G$-orbits in $S\setminus T$ have a trivial stabilizer.
  Then $\bar \P_n(G,S,T)$ is homotopy equivalent to a wedge of $d$-dimensional spheres with
  \[ d =
    \begin{cases}
      n-1 & \text{if $T \neq \varnothing$} \\
      n-2 & \text{if $T = \varnothing$}.
    \end{cases}
  \]
  In particular, if $|G| \geq 2$ and all the $G$-orbits in $S$ have a trivial stabilizer, the poset $\P_n(G,S,\varnothing)$ is not shellable.
  \label{prop:trivial-stabilizers}
\end{proposition}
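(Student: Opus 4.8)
The plan is to repeatedly apply the Orbit Reduction Lemma (Lemma \ref{lemma:reduction-lemma}) to strip away all of the $G$-orbits in $S\setminus T$, one at a time, until we reach a situation that we already understand from the earlier sections. Since by hypothesis every $G$-orbit $O\subseteq S\setminus T$ has a trivial stabilizer, each such $O$ satisfies the hypothesis of Lemma \ref{lemma:reduction-lemma}, so we obtain a chain of homotopy equivalences
\[ \bar\P_n(G,S,T) \simeq \bar\P_n(G,S',T) \simeq \dotsb \simeq \bar\P_n(G,T,T), \]
where at each step we remove one orbit contained in the current ``$S\setminus T$''. (One should check that the intermediate sets remain $G$-invariant and still contain $T$, which is immediate since we only ever delete whole orbits disjoint from $T$.) At the end, $S$ has been replaced by $T$, and $\bar\P_n(G,T,T) = \bar\D_n(G,T)$ because the defining condition $|z^{-1}(O)|\neq 1$ for orbits in $S\setminus T$ is vacuous when $S\setminus T=\varnothing$; equivalently $\P_n(G,T,T)=\D_n(G,T)$.

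Now I invoke Theorem \ref{thm:homotopy-type} with $S$ replaced by $T$. If $T\neq\varnothing$ then $\epsilon=0$ and $\bar\D_n(G,T)$ is homotopy equivalent to a wedge of $(n-1)$-dimensional spheres; the number of spheres is $\prod_{i=0}^{n-1}(|T|-1+|G|i)$, which is nonzero precisely when $|T|\geq 2$ (if $|T|=1$ the wedge is empty, i.e.\ contractible, which is still a wedge of $(n-1)$-spheres vacuously). If $T=\varnothing$ then $\epsilon=1$ and $\bar\D_n(G,\varnothing)$ is homotopy equivalent to a wedge of $(n-2)$-dimensional spheres (here we use $n\geq 2$ and $|G|\geq 1$ to avoid the degenerate empty-poset case $\bar\D_1(\{e\},\varnothing)$; in fact for $T=\varnothing$ we need $S\neq\varnothing$, which holds since $|S|\geq 1$, so $|G|\geq 1$ suffices and the poset is nonempty for $n\geq 2$). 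This establishes the dimension $d$ exactly as stated.

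For the final assertion: suppose $|G|\geq 2$, $T=\varnothing$, and every $G$-orbit in $S$ has a trivial stabilizer. Then $d=n-2$, so $\bar\P_n(G,S,\varnothing)$ has the homotopy type of a wedge of $(n-2)$-spheres. But $\P_n(G,S,\varnothing)$ is a ranked poset of rank $n$, so if it were shellable its order complex — after removing $\zero$ — would be a wedge of spheres of dimension $\rk-2 = n-2$ \emph{with the number of spheres equal to $(-1)^n\mu(\zero,\one)$}; wait, more simply: a shellable poset of rank $n$ with a bottom element has $\bar\P_n$ of dimension $n-2$, which is consistent, so one cannot conclude non-shellability from dimension alone. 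The correct argument is to exhibit non-shellability directly: when $T=\varnothing$ and $|G|\geq2$, $\bar\P_n(G,S,\varnothing)\simeq\bar\D_n(G,\varnothing)$, and the number of spheres $\prod_{i=1}^{n-1}(|G|i-1)$ is \emph{positive} for $|G|\geq 2$, so the order complex is a nontrivial wedge of $(n-2)$-spheres; meanwhile $\P_n(G,S,\varnothing)$ has rank $n$, and a shellable ranked poset of rank $n$ would give a wedge of $(n-2)$-spheres, which is consistent — so once again the obstruction must come from the bottom of the poset. The real point, which is how Examples \ref{example:subposet-Z2} and \ref{example:subposet-Z4} work, is the case $n=2$: there $d=n-2=0$, so $\bar\P_2(G,S,\varnothing)$ is homotopy equivalent to a wedge of $0$-spheres with at least one sphere, i.e.\ it is disconnected; but a shellable complex of dimension $\geq 1$ is connected, and $\P_2(G,S,\varnothing)$ has rank $2$ so $\bar\P_2$ has dimension $0$ — hmm, dimension $0$ complexes can be disconnected.

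Let me restate the clean argument for non-shellability. The poset $\P_n(G,S,\varnothing)$ is ranked of rank $n$ with $\hat 0$; if it were shellable (as a poset with a top element adjoined, say $\hat\P_n\cup\{\one\}$), then $\bar\P_n(G,S,\varnothing)=\P_n(G,S,\varnothing)\setminus\{\zero\}$ would be homotopy equivalent to a wedge of spheres all of the \emph{same} dimension $n-2$. But by the first part of the Proposition (using the reduction lemma) this homotopy type is a wedge of $(n-2)$-spheres — consistent. So the true obstruction: in all these cases one checks that the \emph{order complex is not Cohen--Macaulay}, or more elementarily, for $n=2$ the complex $\bar\P_2$ is $0$-dimensional with $\geq 2$ components \emph{but the poset $\P_2\cup\{\one\}$ is not thin/not shellable because the interval $[\zero,\one]$ has M\"obius number of the wrong sign or multiple increasing chains cannot be arranged}. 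The hard part — and the step I expect to require the most care — is spelling out precisely \emph{why} the wedge-of-spheres conclusion fails to be compatible with shellability; the cleanest route is: $\P_n(G,S,\varnothing)\cup\{\one\}$ shellable $\Rightarrow$ $\bar\P_n(G,S,\varnothing)$ is a wedge of $(\rk-2)$-spheres $=(n-2)$-spheres $\Rightarrow$ (taking $n=2$) $\bar\P_2$ is a wedge of $0$-spheres, hence either empty, a point, or disconnected with components that are points — but a nonempty shellable complex of positive dimension is connected, and for $n=2$ the complex $\bar\P_2$ \emph{does} have positive-dimensional cells is false; so the right contradiction is obtained by noting that shellability of $\P_n\cup\{\one\}$ forces the order complex of $\bar\P_n$ to be \emph{pure} of dimension $n-2$ and connected in codimension $1$, which for $n=2$ means connected — contradicting disconnectedness established via Lemma \ref{lemma:reduction-lemma} together with the positivity of the sphere count $\prod_{i=1}^{n-1}(|G|i-1)>0$. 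For $n\geq 3$ one reduces to $n=2$ by a standard link argument (the link of a bottom atom, or restriction to a rank-$2$ lower interval), since shellability is inherited by such subposets.
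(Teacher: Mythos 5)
The first part of your proof---repeatedly applying Lemma \ref{lemma:reduction-lemma} to strip away orbits with trivial stabilizer, landing on $\bar\P_n(G,T,T)=\bar\D_n(G,T)$, and then reading off the dimension from Theorem \ref{thm:homotopy-type}---is correct and matches the paper's argument.

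The second part never closes, and the reason is a persistent rank off-by-one error that you notice but never fix. You repeatedly assert that if $\hat\P_n(G,S,\varnothing)=\P_n(G,S,\varnothing)\cup\{\one\}$ were shellable, then the order complex of $\bar\P_n(G,S,\varnothing)$ would be a wedge of spheres of dimension $\rk-2=n-2$. But the rank in that formula must be the rank of the \emph{bounded} poset $\hat\P_n(G,S,\varnothing)$, which is $n+1$ (since $\P_n(G,S,\varnothing)$ already has rank $n$ and a top element is being adjoined), so the predicted dimension is $(n+1)-2=n-1$, not $n-2$. Equivalently: the maximal chains of $\bar\P_n(G,S,\varnothing)=\P_n(G,S,\varnothing)\setminus\{\zero\}$ run through ranks $1,\dotsc,n$ and thus give $(n-1)$-simplices, and a shellable $(n-1)$-dimensional complex is a wedge of $(n-1)$-spheres. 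Because you compute $n-2$ on both sides, you declare the situation ``consistent'' and then cast around for an alternative obstruction (M\"obius signs, Cohen--Macaulayness, connectivity in codimension one, a link reduction to $n=2$) without committing to any; the $n=2$ digression also briefly misstates the dimension of $\bar\P_2$ as $0$ when it is $1$.

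Once the dimension is corrected, the contradiction is immediate and requires no case split: Lemma \ref{lemma:reduction-lemma} gives $\bar\P_n(G,S,\varnothing)\simeq\bar\D_n(G,\varnothing)$, which by Theorem \ref{thm:homotopy-type} is a wedge of $\prod_{i=1}^{n-1}(|G|i-1)>0$ spheres of dimension $n-2$ (positivity is exactly where $|G|\geq 2$ is used). A nonempty wedge of $(n-2)$-spheres has nonzero reduced homology in degree $n-2\geq 0$, whereas a wedge of $(n-1)$-spheres, or a contractible space, has none. Hence $\hat\P_n(G,S,\varnothing)$ is not shellable. This is exactly the paper's argument; your first half is right, but the second half has a genuine gap.
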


\begin{proof}
  For the first part, a repeated application of Lemma \ref{lemma:reduction-lemma} yields $\bar \P_n(G,S,T) \simeq \bar \P_n(G,T,T) = \bar \D_n(G,T)$. 
  Then the homotopy type of $\bar D_n(G,T)$ is given by Theorem \ref{thm:homotopy-type}.
  In particular, the dimension $d$ of the spheres equals $\rk D_n(G,T) - 1$, which is $n-1$ for $T \neq \varnothing$ and $n-2$ for $T = \varnothing$.
  
  For the second part, $\bar \P_n(G,S,\varnothing) \simeq \bar \D_n(G, \varnothing)$.
  Since $|S| \geq 1$, the poset $\P_n(G,S,\varnothing)$ has rank $n$.
  For $\P_n(G,S,\varnothing)$ to be shellable, the order complex of $\bar \P_n(G,S,\varnothing)$ must be homotopy equivalent to a wedge of $(n-1)$-dimensional spheres.
  The hypothesis $|G| \geq 2$ ensures that $\bar \P_n(G,S,\varnothing) \simeq \bar \D_n(G,\varnothing)$ is a wedge of a positive number of $(n-2)$-dimensional spheres (by Theorem \ref{thm:homotopy-type}).
  Then $\P_n(G,S,\varnothing)$ is not shellable.
\end{proof}

The second part of this proposition yields a large family of examples of non-shellable subposets of $S$-Dowling posets, generalizing Example \ref{example:subposet-Z2}.
At the same time it shows that this family is still well-behaved, as $\bar \P_n(G,S,\varnothing)$ is homotopy equivalent to a wedge of spheres.

We now prove that $\P_n(G,S,T)$ is shellable if the $G$-action on $S\setminus T$ is trivial.
This generalizes \cite[Theorem 6]{delucchi2017shellability}.
Let $\hat \P_n(G,S,T) = \P_n(G,S,T) \cup \{\one\} \subseteq \hat\D_n(G,S)$.
For an element $x = (\tilde \beta, z) \in \P_n(G,S,T)$, define $S(x) \subseteq S$ as the image of the coloring map $z\colon Z \to S$.
Consider the following edge labeling, which is a slightly modified version of the edge labeling of Definition \ref{def:dowling-labeling} (cf.\ \cite[Definition 6]{delucchi2017shellability}).

\begin{definition}[Edge labeling of $\hat\P_n(G,S,T)$]
  Fix arbitrary total orders on $S$ and on $G \setminus \{e\}$.
  For a subset $R\subseteq S$, let $R_{\leq s} = \{ r \in R \mid r \leq s\}$.
  Let $\mu$ be the edge labeling of $\hat\P_n(G,S,T)$ defined as follows ($A$, $B$ and $C$ are as in Definition \ref{def:edge-types}).
  \[
    \mu(x, y) =
    \begin{cases}
      (0, \,\max(\min A, \min B)) & \text{if $(x,y)$ is coherent} \\
      (2, \,\min C, \,\alpha(x,y)) & \text{if $(x,y)$ is non-coherent} \\
      (1, \, |S(x)_{\leq s}|) & \text{if $(x,y)$ is colored of a color $s \in S(x)$} \\
      (1, \, |S_{\leq s} \cup S(x)|) & \text{if $(x,y)$ is colored of a color $s \not\in S(x)$} \\
      (1, \, 2) & \text{if $y = \one$.}
    \end{cases}
  \]
  \label{def:subposet-labeling}
\end{definition}

The difference with the edge labeling $\lambda$ of Definition \ref{def:dowling-labeling} is in the labels of colored edges: $\lambda$ only depends on the color $s$, whereas $\mu$ favors colors which already belong to $S(x)$.

\begin{theorem}[EL-shellability of subposets]
  Let $n\geq 2$, and suppose that the $G$-action on $S\setminus T$ is trivial.
  Then the edge labeling $\mu$ of Definition \ref{def:subposet-labeling} is an EL-labeling of $\hat \P_n(G,S,T)$.
  \label{thm:subposet-shellability}
\end{theorem}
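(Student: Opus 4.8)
The plan is to imitate the proof of Theorem \ref{thm:dowling-shellability} (and of \cite[Theorem 6]{delucchi2017shellability}), analysing an interval $[x,y]\subseteq\hat\P_n(G,S,T)$ with $\rk(y)-\rk(x)\geq 2$ according to the same four cases. First I would record two elementary facts about $\P_n(G,S,T)$. Along any saturated chain the zero block only grows, so every element of $[x,y]$ has zero block between those of $x$ and $y$; in particular, if the zero block of $y$ is empty, then so is that of $x$, and then the whole interval lies in the sub-poset of elements with empty zero block, on which $\mu$ coincides with $\lambda$. Moreover — and this is the only place the hypothesis on the $G$-action enters — for $s\in S\setminus T$ the $G$-orbit of $s$ is $\{s\}$, so coloring a block $B$ of $x$ by $s$ produces an element of $\P_n(G,S,T)$ if and only if $|B|\geq 2$ or $s\in S(x)$ (for $s\in T$ the move is always legal). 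I would also use that $\mu$ agrees with $\lambda$ on all non-colored edges, and that $\mu$ assigns to a colored edge introducing a new color a strictly larger label than it assigns to any colored edge reusing an existing color.

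Case 2 (where $y$ has a single non-singleton block and empty zero block) contains no colored edges by the first fact, so it is identical to the corresponding case of Theorem \ref{thm:dowling-shellability}, using Lemma \ref{lemma:non-coherent-increasing-chains}. For Case 1 ($y=\one$) I would argue, exactly as there, that an increasing maximal chain contains no non-coherent merge (its label $(2,*,*)$ exceeds the forced final label $(1,2)$) and at most one colored edge, which must carry label $(1,1)$; hence the chain is the unique increasing sequence of coherent merges collapsing $x$ to a single non-zero block $B$, followed by one coloring and the step to $\one$. That coloring is forced to use $\min S(x)$ when $S(x)\neq\varnothing$ (a reused color, hence legal in $\P_n(G,S,T)$), and $\min S$ when $S(x)=\varnothing$; in the latter case $x$ has empty zero block, so $B=[n]$ with $|B|=n\geq 2$, and the move is again legal. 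Uniqueness of the increasing chain and its lexicographic minimality then follow as in Theorem \ref{thm:dowling-shellability}.

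In Case 3 ($y$ has only singleton blocks, hence a nonempty zero block colored by $z'$) I would follow the argument of Theorem \ref{thm:dowling-shellability}, with one new verification. An increasing chain first performs coherent merges up to the unique element $p_m$ whose non-zero blocks are the classes of $x$-blocks sharing a common $z'$-color, and then colors those blocks in the order dictated by $\mu$. The additional point is legality in $\P_n(G,S,T)$: coloring by a color in $T$, or by a color already in $S(x)$, is legal by the second fact above; and if a block $A$ of $p_m$ must be colored by some $s\in S\setminus T$ with $s\notin S(x)$, then every element of the zero block of $y$ of color $s$ lies in $A$, and there are at least two of them since $y\in\P_n(G,S,T)$, so $|A|\geq 2$ and the move is legal. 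For lexicographic minimality I would check, as in Theorem \ref{thm:dowling-shellability}, that from any element of the chain a competing move remaining inside $[x,y]$ is either a coherent merge (while the chain is still merging) or a coloring by one of the colors of $z'$, and in each situation $\mu$ is minimized by the prescribed step; here the preference of $\mu$ for reused colors is exactly what keeps the sequence of colorings increasing.

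Finally, Case 4 reduces to the previous ones: as in Theorem \ref{thm:dowling-shellability}, $[x,y]$ is a product of intervals $[x_i,y_i]$ with $y_0$ carrying the zero block of $y$ and each $y_i$ for $i\geq 1$ carrying a single non-singleton block with empty zero block. By the first fact the constraint defining $\P_n(G,S,T)$ affects only the factor $[x_0,y_0]$, which is thus of Case 3 type, while the other factors are of Case 2 type; the label sets of distinct factors are disjoint and the restriction of $\mu$ to each factor is the corresponding labeling, because $S(w)$ depends only on the zero-block coordinate of $w$. Then \cite[Proposition 10.15]{BW2} concludes. I expect the main obstacle to be the bookkeeping in Cases 1 and 3 guaranteeing that every coloring performed by the prescribed increasing chain — and every coloring competing with it — is legal in $\P_n(G,S,T)$: this is exactly where the triviality of the $G$-action on $S\setminus T$ and the design of $\mu$ are used, the remainder being a routine adaptation of the proof of Theorem \ref{thm:dowling-shellability}.
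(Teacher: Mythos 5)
Your proposal is correct and takes essentially the same route as the paper: the same four-case reduction to the proof of Theorem \ref{thm:dowling-shellability}, with the new content being the verification that the increasing chain (and the competing steps in the lex-minimality check) actually lie in $\P_n(G,S,T)$, using the triviality of the $G$-action on $S\setminus T$. Your phrasing differs only cosmetically — you isolate the legality criterion for colorings as a standalone fact, argue Case 3 via $|A|\geq 2$ instead of the paper's $z'^{-1}(s)=z''^{-1}(s)$, and make explicit why $\mu$ is compatible with the product decomposition in Case 4 — but the argument is the same.
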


\begin{proof}
  Since the edge labelings $\lambda$ and $\mu$ almost coincide, most of the proof of Theorem \ref{thm:dowling-shellability} also applies here.
  We refer to that proof (with its notations), and we only highlight the differences.
  Let $[x,y]$ be an interval in $\hat \P_n(G,S,T)$.
  
  \smallskip
  \textbf{Case 1: $y = \one$.}
  The only difference is that, if $k=\ell$ and $Z\neq \varnothing$, the edge $(p_{k-1},p_k)$ is colored with color $\min S(x)$ (and not with color $s_1$).
  This modification assures that $p_k$ belongs to the subposet $\P_n(G,S,T)$: if $|Z| \neq \varnothing$, the color of the edge $(p_{k-1}, p_k)$ was already used in $x$; if $|Z| = \varnothing$, the edge $(p_{k-1},p_k)$ colors $n\geq 2$ elements at the same time.
  
  \smallskip
  \textbf{Case 2: $y = (\tilde \beta', \varnothing)$, and $\tilde\beta'$ has only one non-singleton block $\tilde B$.}
  In this case the edge labelings $\lambda$ and $\mu$ coincide, and $[x,y]$ is also an interval of $\D_n(G,S)$.
  Therefore the proof works without changes.
  
  \smallskip
  \textbf{Case 3: $y = (\tilde\beta', z')$, and all blocks of $\tilde\beta'$ are singletons.}
  Here we only have to show that the increasing chain $\gamma = (x=p_0 \precdot p_1 \precdot \dotsb \precdot p_k = y)$ is contained in the subposet $\P_n(G,S,T)$.
  Until the element $p_m \in \gamma$, only coherent edges are used, so $p_i \in \P_n(G,S,T)$ for $i\leq m$.
  Suppose that $p_i \in \P_n(G,S,T)$ for some $i \in \{m, m+1, \dotsc, k-1\}$.
  We want to prove that $p_{i+1} = (\tilde \beta'', z'')$ also belongs to $\P_n(G,S,T)$.
  The edge $(p_i, p_{i+1})$ is colored of some color $s$.
  If $s\in T$ then $p_{i+1} \in \P_n(G,S,T)$ because no color of $S\setminus T$ is added.
  If $s\not\in T$, the $G$-action is trivial on $s$.
  Also, $(p_i,p_{i+1})$ is the only edge of $\gamma$ with color $s$.
  Therefore every other colored edge of $\gamma$ has a color $s' \neq s$, and cannot create elements colored by $s$ (because $s$ and $s'$ are in different orbits).
  Then $z'\,^{-1}(s) = z''\,^{-1}(s)$, and so $|z''\,^{-1}(s)| = |z'\,^{-1}(s)| \neq 1$ because $y \in \P_n(G,S,T)$.
  Therefore $p_{i+1} \in \P_n(G,S,T)$.
  By induction, the entire chain $\gamma$ is contained in $\P_n(G,S,T)$.
  
  \smallskip
  \textbf{Case 4: $y \neq \one$.}
  The proof works without changes in this case.
\end{proof}

\begin{remark}
  For $T=S$, Theorem \ref{thm:subposet-shellability} says that $\mu$ is an EL-labeling of $\hat \D_n(G,S)$.
\end{remark}

\begin{remark}
  Let $G = \Z_2$ act on $S = \{+, -, 0\}$ by exchanging $+$ and $-$.
  A computer check shows that the order complex of the poset $\hat P_3(G,S,\varnothing)$ is shellable.
  However, the edge labeling of Definition \ref{def:el-labeling} is not an EL-labeling of $\hat P_3(G,S,\varnothing)$.
\end{remark}

It seems difficult in general to derive an explicit formula for the number of decreasing maximal chains in $\hat \P_n(G,S,T)$.
This was done in \cite[Section 5]{delucchi2017shellability} for posets of layers of arrangements defined by root systems (i.e.\ with $G = \Z_2$ acting trivially on $S$, and $|T| = 0, 1$).

Lemma \ref{lemma:reduction-lemma} and Theorem \ref{thm:subposet-shellability} yield the following strengthening of Proposition \ref{prop:trivial-stabilizers}.

\begin{theorem}
  Let $n\geq 2$ and $|S| \geq 1$.
  Suppose that all the $G$-orbits in $S\setminus T$ either have cardinality 1, or have a trivial stabilizer.
  Then $\bar\P_n(G,S,T)$ is homotopy equivalent to a wedge of $d$-dimensional spheres with
  \[ d =
    \begin{cases}
      n-1 & \text{if $T \neq \varnothing$ or at least one $G$-orbit is trivial} \\
      n-2 & \text{otherwise}.
    \end{cases}
  \]
  \label{thm:homotopy-type-subposets}
\end{theorem}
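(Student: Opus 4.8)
The plan is to split $S\setminus T$ into its trivial $G$-orbits and its free $G$-orbits, reduce away the free orbits using the Orbit reduction lemma, and then handle the resulting poset — in which the $G$-action on what remains of $S\setminus T$ is trivial — by EL-shellability. Write $S\setminus T = T_0 \sqcup F$, where $T_0$ is the union of all trivial orbits and $F$ is the union of all free orbits. Applying Lemma \ref{lemma:reduction-lemma} once for each free orbit contained in $F$, one obtains a homotopy equivalence
\[
  \bar\P_n(G,S,T) \simeq \bar\P_n(G, S\setminus F, T) = \bar\P_n(G, T \cup T_0, T).
\]
Now set $S' = T\cup T_0$. By construction $G$ acts trivially on $S'\setminus T = T_0$, so Theorem \ref{thm:subposet-shellability} applies to $\hat\P_n(G,S',T)$: this poset is EL-shellable, and since it is ranked of rank $n$ (because $n\geq 2$ and $|S'|\geq 1$; note $S' = \varnothing$ forces $T=\varnothing$ and $T_0=\varnothing$, i.e.\ $S = F$, handled below), the order complex of $\bar\P_n(G,S',T)$ is homotopy equivalent to a wedge of $(n-1)$-dimensional spheres whenever $\rk\P_n(G,S',T) = n$, i.e.\ whenever $S'\neq\varnothing$, which holds precisely when $T\neq\varnothing$ or at least one $G$-orbit is trivial. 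This gives the first case $d = n-1$.

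For the remaining case, suppose $T = \varnothing$ and every $G$-orbit in $S$ is free, so $S = F$ and $T_0 = \varnothing$. Then the reduction collapses all the way: $\bar\P_n(G,S,\varnothing) \simeq \bar\P_n(G,\varnothing,\varnothing) = \bar\D_n(G,\varnothing)$. By Theorem \ref{thm:homotopy-type}, $\bar\D_n(G,\varnothing)$ is homotopy equivalent to a wedge of $(n-2)$-dimensional spheres (the number being $\prod_{i=0}^{n-1}(|G|-1+|G|i)$ up to sign, positive since $n\geq 2$ forces at least one factor, and one checks $|G|\geq 1$ makes it nonzero — indeed for $|G|=1$ the product is $0\cdot 1\cdots$, but then there are no free orbits at all unless $S=\varnothing$, which is the excluded degenerate situation, so in this branch $|G|\geq 2$ and the wedge is nonempty). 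This is exactly $d = n-2$, matching the "otherwise" case.

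The one point that needs care — and the main obstacle — is bookkeeping the degenerate boundary cases so that the dichotomy in the statement is exactly reproduced: one must check that "$S' = \varnothing$" happens if and only if $T = \varnothing$ and all orbits are free, that in the complementary case $\P_n(G,S',T)$ genuinely has rank $n$ (so Theorem \ref{thm:subposet-shellability} delivers top-dimensional spheres and not a lower-dimensional complex), and that the wedge in question is nonempty, i.e.\ the relevant Möbius number is nonzero, so that "homotopy equivalent to a wedge of $d$-spheres" is not vacuously satisfied by a contractible complex for a smaller $d$. All of this follows by inspecting the formulas in Theorem \ref{thm:homotopy-type} together with the hypotheses $n\geq 2$, $|S|\geq 1$; there is no further combinatorial work, since shellability and the orbit reduction are already in hand.
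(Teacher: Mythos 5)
Your proof is correct and takes essentially the same route as the paper's: reduce away the free orbits of $S\setminus T$ via Lemma \ref{lemma:reduction-lemma}, observe that what remains satisfies the hypothesis of Theorem \ref{thm:subposet-shellability} (the residual $G$-action on $T_0$ is trivial), and read off the dimension of the resulting spheres from the rank of the reduced poset. In fact you give more detail than the paper — its proof is two sentences and does not explicitly check the $n-1$ versus $n-2$ dichotomy, whereas you carefully identify $S' = T\cup T_0$, note that $S' = \varnothing$ iff $T = \varnothing$ and every orbit is free, and verify that $|G|\geq 2$ holds automatically in the second branch so that the wedge there is genuinely a wedge of $(n-2)$-spheres and not merely contractible. (One small wording slip: when $|G|=1$ every orbit is free, since the stabilizer is a subgroup of $\{e\}$; the point is that such orbits are simultaneously trivial, so the ``otherwise'' branch forces $S=\varnothing$, contradicting $|S|\geq 1$. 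Your conclusion $|G|\geq 2$ is right.) The extra nonemptiness check is not strictly needed for the statement to be true — a contractible complex is vacuously a wedge of $d$-spheres for every $d$ — but it does show the dichotomy is not empty.
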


\begin{proof}
  Remove all orbits of $S\setminus T$ with a trivial stabilizer, by a repeated application of Lemma \ref{lemma:reduction-lemma}.
  The remaining poset is shellable by Theorem \ref{thm:subposet-shellability}, and thus it is homotopy equivalent to a wedge of spheres.
\end{proof}

\section*{Acknowledgements}
I would like to thank Christin Bibby for the useful conversations about the topics of this paper.
I would also like to thank the anonymous referees for their comments and suggestions.
This work was supported by Scuola Normale Superiore, and by the Swiss National Science Foundation Professorship grant PP00P2\_179110/1.

\bibliographystyle{amsalpha-abbr}
\bibliography{bibliography}

\end{document}